\newtheorem{thm}{Theorem}[section]
\newtheorem{lem}[thm]{Lemma}
\newtheorem{prop}[thm]{Proposition}
\newtheorem{cor}[thm]{Corollary}
\newtheorem*{thmB}{THEOREM B}
\newtheorem*{corC}{COROLLARY C}
\newtheorem*{conjA}{CONJECTURE A}
\newtheorem*{conj0}{CONJECTURE}
\theoremstyle{remark}
\newtheorem{rem}[thm]{Remark}
\newtheorem{examples}[thm]{Examples}
\numberwithin{equation}{section}
\renewcommand{\le}{\leqslant}
\renewcommand{\ge}{\geqslant} 
\newcommand{\BIGOP}[1]
  {\mathop{\mathchoice
  {\raise-0.22em\hbox{\huge $#1$}}
  {\raise-0.05em\hbox{\Large $#1$}}{\hbox{\large $#1$}}{#1}}}
\newcommand{\calM}{\mathcal{M}}
\newcommand{\calMtilde}{\tilde{\calM}}
\newcommand{\calR}{\mathcal{R}}
\newcommand{\CC}{\mathbb{C}}
\newcommand{\chireg}{\chi_{\mathrm{reg}}}
\newcommand{\Gal}{\mathrm{Gal}}
\newcommand{\GL}{\mathrm{GL}}
\newcommand{\id}{\mathrm{id}}
\newcommand{\Irr}{\mathrm{Irr}}
\newcommand{\lexp}[2]{\setbox0=\hbox{$#2$} \setbox1=\vbox to
                 \ht0{}\,\box1^{#1}\!#2}
\newcommand{\Max}{\mathrm{Max}}
\newcommand{\NN}{\mathbb{N}}
\newcommand{\QQ}{\mathbb{Q}}
\newcommand{\scrX}{\mathscr{X}}
\newcommand{\Tr}{\mathrm{Tr}}
\newcommand{\und}[1]{\underline{#1}}
\newcommand{\ZZ}{\mathbb{Z}}
\title{Feit's Conjecture, the canonical Brauer induction formula, and Adams operations}
\author{Robert Boltje}
\address{Department of Mathematics, University of California Santa Cruz, CA 95064, USA}
\email{boltje@ucsc.edu}
\author{Gabriel Navarro}
\address{Departament de Matem\`atiques, Universitat de Val\`encia, 46100 Burjassot,
Val\`encia, Spain}
\email{gabriel@uv.es}
\thanks{Research  supported by Ministerio de Ciencia e Innovaci\'on
PID2019-103854GB-I00 and CDEIGENT grant CIDEIG/2022/29 funded by Generalitat Valenciana}
\keywords{}
\subjclass[2010]{Primary 20C15}
\date{\today}
\begin{document}

\sloppy


\maketitle




\begin{abstract}
This paper is motivated by a strong version of Feit's conjecture, first formulated by the authors in joint work with A. Kleshchev and P. H. Tiep in 2025, concerning the conductor $c(\chi)$ of an irreducible character $\chi$ of a finite group $G$. We connect the conjecture with the following construction: For any positive integer $n$ dividing the exponent of $G$ and for any character $\chi$ of $G$, we introduce an integer-valued invariant $S(G,\chi,n)$ which can be defined as the sum of certain coefficients of the canonical Brauer induction formula of $\chi$, or alternatively as the multiplicity of the trivial character in a specified integral linear combination of Adams operations of $\chi$. We show two facts about this invariant. The first seems of independent interest (apart from Feit's conjecture): $S(G,\chi,n)$ is always non-negative, and it is positive if and only if a representation affording $\chi$ involves an eigenvalue of order $n$. Secondly, the strong version of Feit's conjecture holds for an irreducible character $\chi$ if and only if $S(G,\chi, c(\chi))>0$.
\end{abstract}


\section{Introduction}\label{sec intro}

In \cite[3.2(b)]{Feit1980}, Walter Feit posed a question that has since become known as Feit’s conjecture. This conjecture would imply Brauer’s Problem 41, as well as several other earlier results in the field.

\medskip
\begin{conj0}[Feit 1980]
Let $G$ be a finite group and $n$ a positive integer. If $G$ has an irreducible character of conductor $n$, 
then $G$ has an element of order $n$.
\end{conj0}

Here, for an abelian Galois extension $K$ of the rational numbers $\QQ$, the {\em conductor} $c(K)$ of $K$ 
is the smallest positive integer $n$ such that $K$ is contained in $\QQ_n$, the $n$-th cyclotomic field.
For any character $\chi$ of a finite group $G$, the field $\QQ(\chi)$ obtained by adjoining the character values $\chi(x)$, $x\in G$, to $\QQ$,
is an abelian Galois extension of $\QQ$. One calls $c(\chi):=c(\QQ(\chi))$ the {\em conductor of $\chi$}.

In the recent paper \cite{BKNT}, a reduction of Feit's conjecture to the so-called {\em inductive Feit condition}
for all non-abelian simple groups, was given. In the same paper, the following conjecture was stated. 

\begin{conjA}[Boltje-Kleshchev-Navarro-Tiep 2024] Let $G$ be a finite group and $\chi$ an irreducible character of $G$. 
Then there exists a subgroup $H$ of $G$ and a linear character $\varphi$ of $H$ 
such that $\varphi$ is a constituent of the restriction $\chi_H$ and $c(\chi)=o(\varphi)$, the order of $\varphi$.
\end{conjA}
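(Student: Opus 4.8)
The plan is to recast Conjecture~A as a statement about the eigenvalues of a representation $\rho$ affording $\chi$, reduce it to the positivity of the invariant $S(G,\chi,c(\chi))$ introduced below, and then try to establish that positivity. The first reduction is elementary. If $H\le G$ and $\varphi$ is a linear constituent of $\chi_H$ with $o(\varphi)=c(\chi)$, then picking $h\in H$ with $\varphi(h)$ of order $c(\chi)$ and restricting $\rho$ to $\langle h\rangle$ exhibits $\varphi(h)$ as an eigenvalue of $\rho(h)$ of order $c(\chi)$; conversely, if $\rho(g)$ has an eigenvalue $\zeta$ of order $c(\chi)$, then $H=\langle g\rangle$ together with the linear character $\varphi$ of $H$ given by $\varphi(g)=\zeta$ witnesses Conjecture~A. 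Hence Conjecture~A for $\chi$ is equivalent to the assertion that $\rho$ has an eigenvalue of order $c(\chi)$, and by the properties of $S$ proved below --- $S(G,\chi,n)\ge 0$ always, with $S(G,\chi,n)>0$ precisely when $\rho$ has an eigenvalue of order $n$ --- to the inequality $S(G,\chi,c(\chi))>0$. So the whole problem is to show $S(G,\chi,c(\chi))>0$ for every finite group $G$ and every $\chi\in\Irr(G)$.

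One half of this is easy. For each prime $p$ dividing $c:=c(\chi)$ there is an element of $G$ whose image under $\rho$ has an eigenvalue of order divisible by $p^{v_p(c)}$, and hence, after replacing that element by a suitable power, an eigenvalue of order exactly $p^{v_p(c)}$. Indeed, if every eigenvalue of every $\rho(g)$ had $p$-part of order at most $p^{v_p(c)-1}$, then all eigenvalues --- and so all character values $\chi(g)$ --- would lie in a cyclotomic field $\QQ_N$ with $v_p(N)=v_p(c)-1$, whence $\QQ(\chi)\subseteq\QQ_N$ and $c(\chi)\mid N$, contradicting $v_p(c(\chi))=v_p(c)$.

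The crux --- and the step I expect to be the main obstacle --- is to combine these prime-by-prime eigenvalues into a single eigenvalue of order exactly $c$, equivalently into a single pair $(H,\varphi)$ in the canonical Brauer induction formula of $\chi$ with $o(\varphi)=c$. Eigenvalues attached to different elements of $G$ need not be simultaneously realizable on one element, and the linear characters they produce need not be tied together by the irreducibility of $\chi$ unless a relevant abelian subgroup happens to be normal. I would attack this via the two descriptions of $S$: on the Brauer-induction side, by exploiting the naturality of the canonical induction formula under restriction to sections of $G$ to reduce to $p$-groups or $p$-elementary subgroups, where all eigenvalue orders are prime powers; and on the Adams-operations side, by analyzing the virtual character $\sum_k c_k\,\Psi^k\chi$ whose multiplicity of the trivial character equals $S(G,\chi,c)$, and translating the hypothesis $c(\chi)=c$ into the Galois-invariance statement that, for each prime $p\mid c$, some $\rho(g)$ has among its eigenvalues a multiset of roots of unity of order divisible by $p^{v_p(c)}$ that is not fixed by the order-$p$ subgroup of $\Gal(\QQ_{p^{v_p(c)}}/\QQ_{p^{v_p(c)-1}})$. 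Upgrading such local and Galois-theoretic information to a genuine eigenvalue of order $c$ is exactly the point at which the problem is not understood in general; it is what leaves Conjecture~A open, and one would expect progress first for $M$-groups (where $\chi$ is monomial and linear constituents of restrictions are well controlled), with the remaining difficulty concentrating, as for Feit's conjecture itself, on the non-abelian simple groups.
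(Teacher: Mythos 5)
The statement you were asked to prove is a conjecture, and it remains open; the paper itself contains no proof of it. What the paper establishes (Theorem~B and Corollary~C) is a chain of equivalences: Conjecture~A holds for $(G,\chi)$ if and only if $S(G,\chi,c(\chi))>0$, if and only if some $\scrX(x)$, $x\in G$, has an eigenvalue of order exactly $c(\chi)$. Your opening reduction --- passing between a linear constituent $\varphi$ of $\chi_H$ with $o(\varphi)=c(\chi)$ and an eigenvalue $\zeta$ of $\scrX(g)$ of order $c(\chi)$, and then to the positivity of $S$ --- is exactly this, so up to that point you are reproducing the paper's reformulation rather than proving the conjecture.

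Your next observation, that for each prime $p\mid c(\chi)$ some $\scrX(g)$ has an eigenvalue with $p$-part of full order $p^{v_p(c(\chi))}$, is correct: otherwise all character values, hence $\QQ(\chi)$, would lie in a cyclotomic field with too small a $p$-part, forcing $v_p(c(\chi))$ to drop. But, as you say yourself, these prime-local witnesses sit on different elements of $G$, and nothing in your sketch forces a single eigenvalue of order $c(\chi)$ to exist. Neither of your suggested lines of attack --- restriction to $p$-elementary sections, or Galois-invariance of eigenvalue multisets --- engages the irreducibility of $\chi$ in any essential way, and that is the hypothesis from which any proof would have to draw its strength. So the gap you flag is a genuine one: you have correctly located the open problem, but have not resolved it. For the record, the paper's positive results on Conjecture~A are its truth for solvable groups (proved in \cite{BKNT}) and its computational verification for the sporadic simple groups via the criterion $S(G,\chi,c(\chi))>0$; neither of those appears in your proposal.
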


Note that Conjecture $A$ is equivalent to the seemingly weaker {\bf Conjecture A'} in which the condition $c(\chi)=o(\varphi)$ is replaced with $c(\chi)\mid o(\varphi)$. In fact, if $c(\chi)\mid o(\varphi)$, then let $g\in G$ be such that the coset of $g$ generates the cyclic group $H/\ker(\varphi)$. Then the restriction $\psi$ of $\varphi$ to some subgroup of $\langle g\rangle$ will satisfy $c(\chi)=o(\psi)$. Also, some power of the element $g$ will have order $c(\chi)$ which shows that Conjecture~A' implies Feit's conjecture. Thus, if Conjecture~A (or equivalently Conjecture A') holds for given $G$ and $\chi$,
then also Feit's conjecture holds for this $G$ and $\chi$.


\smallskip
Conjecture~A was proved in \cite{BKNT} for all solvable groups. 
It also holds for all sporadic simple groups, as can be verified with \cite{GAP}, although it has not yet been reduced to a condition on simple groups.

\smallskip
When studying the plausibility of Conjecture A, one soon arrives at the
basic question:  how can $\chi$ determine a pair $(H,\varphi)$? One of the essential and most famous theorem in character theory is Brauer's induction theorem that asserts that every irreducible character can be expressed as an integer   combination of the induction of linear characters of subgroups of $G$. As proved by V. Snaith in 1986 by topological methods
(\cite{Snaith88}), and the first author in 1990 by algebraic methods (\cite{Boltje1990}), this can be achieved in a canonical way; that is, there is 
 a canonical expression  
 $$\chi=\sum_{[H,\varphi]_G} \alpha_{[H,\varphi]_G} \varphi^G \, ,$$
where here the right hand side of the above equation is an element of the free abelian group on the set of $G$-conjugacy classes $[H,\varphi]_G$ 
of pairs $(H,\varphi)$, where $H$ is a subgroup of $G$ and $\varphi$ is a linear character of $H$, and the $\alpha_{[H,\varphi]_G}$ are uniquely determined integers.  The first motivation for this paper was the realization that if $\chi$ is an irreducible character of $G$ 
and there exists a pair $(H,\varphi)$ satisfying the condition in Conjecture~A', 
then there must also exist such a pair that occurs in the canonical Brauer induction formula of $\chi$ (see Proposition~\ref{prop 5 equivalences}). This statement is the reason to consider the slight reformulation of Conjecture A to Conjecture A'.
 
 \smallskip
 This led us to introduce a new invariant.  For any positive integer $n$ and any character $\chi$ of $G$, we define the sum
 \begin{equation}\label{eqn S def}
   S(G,\chi,n):=\sum_{\substack{[H,\varphi]_G\\ n\mid o(\varphi)}} \alpha_{[H,\varphi]_G}(\chi)\,.
\end{equation}

 In our first main result, we prove the following properties of the integer $S(G,\chi,n)$ using the behavior of the canonical induction formula with respect to Adams operations.

\begin{thmB}
Let $G$ be a finite group, let $\chi$ be a character of $G$ afforded by a representation $\scrX\colon G\to\GL_d(\CC)$, and let $n$ be a positive divisor of $\exp(G)$.

\smallskip
{\rm (a)} The integer $S(G,\chi,n)$ is equal to the multiplicity of the trivial character in the virtual character
\begin{equation}\label{eqn alternating Adams character}
   \sum_{\rho\subseteq\pi(n)} (-1)^{|\rho|} \Psi^{n(\rho)}(\chi)
\end{equation}
of $G$.

\smallskip
{\rm (b)} One has $S(G,\chi,n)\ge 0$.

\smallskip
{\rm (c)} The following are equivalent:

\smallskip
\qquad {\rm (i)} There exists a subgroup $H$ of $G$ and a linear constituent $\varphi$ of $\chi_H$ such that $n = o(\varphi)$.

\smallskip
\qquad {\rm (ii)} One has $S(G,\chi,n)> 0$.

\smallskip
\qquad {\rm (iii)} There exists an element $x\in G$ and an eigenvalue $\zeta$ of $\scrX(x)$ such that $n = o(\zeta)$.
\end{thmB}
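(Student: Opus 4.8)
The plan is to prove part (a) first and to deduce parts (b) and (c) from it by restricting to cyclic subgroups and evaluating Ramanujan sums; part (a) is where the behaviour of the canonical induction formula under Adams operations is used, and I expect that to be the main obstacle.

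For (a), write $a=a_G\colon R(G)\to R_+(G)$ for the canonical induction formula and $b=b_G\colon R_+(G)\to R(G)$, $b[H,\varphi]_G=\varphi^G$, so that $b\circ a=\mathrm{id}$ and $a(\chi)=\sum_{[H,\varphi]_G}\alpha_{[H,\varphi]_G}(\chi)\,[H,\varphi]_G$. I would reduce (a) to the identity, call it $(\ast)$,
\[
  \sum_{\substack{[H,\varphi]_G\\ o(\varphi)\mid m}}\alpha_{[H,\varphi]_G}(\chi)=\langle\Psi^m(\chi),1_G\rangle\qquad\text{for all }m\mid\exp(G),
\]
which is the point where the compatibility of the canonical induction formula with Adams operations enters and which I expect to be the hardest step: proving $(\ast)$ amounts to checking that the $\ZZ$-linear functional on $R_+(G)$ which first applies $[H,\varphi]_G\mapsto[H,\varphi^m]_G$ and then sums the coefficients of all pairs $[K,1_K]_G$ agrees, on the image of $a$, with $\chi\mapsto\langle\Psi^m(\chi),1_G\rangle$. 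The natural route is that $a$ intertwines $\Psi^m$ with the power operation $[H,\varphi]_G\mapsto[H,\varphi^m]_G$ on $R_+(G)$, after which the functional ``sum of coefficients of $[K,1_K]_G$-pairs'', which equals $\langle b(-),1_G\rangle$, gives the claim via $b\circ a=\mathrm{id}$; one must be careful that it is this ``external'' Adams operation on $R_+(G)$, acting as the $m$-th power on the linear characters labelling the basis, that is relevant, and checking that the trivial-character part transforms correctly is where the care is needed. Granting $(\ast)$, the rest of (a) is formal. For $k\mid\exp(G)$, inclusion--exclusion over the primes dividing $n$ gives $\mathbf 1[n\mid k]=\sum_{\rho\subseteq\pi(n)}(-1)^{|\rho|}\mathbf 1[k\mid n(\rho)]$, the definition of $n(\rho)$ being arranged so that the $p$-adic valuation of $n(\rho)$ is $v_p(\exp(G))$ for $p\notin\rho$ and $v_p(n)-1$ for $p\in\rho$. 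Substituting $k=o(\varphi)$, summing against $\alpha_{[H,\varphi]_G}(\chi)$, interchanging the two sums, and invoking $(\ast)$ with $m=n(\rho)$ then yields $S(G,\chi,n)=\sum_{\rho\subseteq\pi(n)}(-1)^{|\rho|}\langle\Psi^{n(\rho)}(\chi),1_G\rangle$, the multiplicity of $1_G$ in \eqref{eqn alternating Adams character}.

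For (b) and the equivalence (ii)$\Leftrightarrow$(iii) I would start from (a): writing $\Phi_n(\chi)=\sum_{\rho\subseteq\pi(n)}(-1)^{|\rho|}\Psi^{n(\rho)}(\chi)$, we have $S(G,\chi,n)=\langle\Phi_n(\chi),1_G\rangle=\tfrac1{|G|}\sum_{g\in G}\Phi_n(\chi)(g)$, and I would group the $g$'s by the cyclic subgroup they generate. If $C=\langle c\rangle\le G$ is cyclic, then $\chi|_C=\sum_i\varphi_i$ is a sum of linear characters (the \emph{eigencharacters}, with $\varphi_i(c)$ running through the eigenvalues of $\scrX(c)$), and since $\Phi_n$ is additive, $\sum_{g\ \mathrm{gen.}\ C}\Phi_n(\chi)(g)=\sum_i\sum_{g\ \mathrm{gen.}\ C}\Phi_n(\varphi_i)(g)$. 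For a single linear character $\varphi$ of $C$ with $o(\varphi)=\ell$ one has $\Phi_n(\varphi)(g)=\sum_\rho(-1)^{|\rho|}\varphi(g)^{n(\rho)}$, and summing over the $\phi(|C|)$ generators of $C$ (with $\phi$ Euler's totient) turns $\sum_{g\ \mathrm{gen.}\ C}\varphi(g)^{n(\rho)}$ into $\tfrac{\phi(|C|)}{\phi(\ell_\rho)}\mu(\ell_\rho)$, where $\ell_\rho=\ell/\gcd(\ell,n(\rho))$, using that the sum of the primitive $\ell_\rho$-th roots of unity is $\mu(\ell_\rho)$ and that $(\ZZ/|C|\ZZ)^\times\to(\ZZ/\ell_\rho\ZZ)^\times$ is surjective. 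Hence $\sum_{g\ \mathrm{gen.}\ C}\Phi_n(\varphi)(g)=\phi(|C|)\,g_n(\ell)$ with $g_n(\ell)=\sum_{\rho\subseteq\pi(n)}(-1)^{|\rho|}\mu(\ell_\rho)/\phi(\ell_\rho)$. A short computation---factoring the $\rho$-sum along the primes $p\in\pi(n)$ according to the sign of $v_p(\ell)-v_p(n)$, the primes with $v_p(\ell)<v_p(n)$ producing a vanishing factor $(1-1)^{(\cdot)}$---shows $g_n(\ell)=0$ unless $n\mid\ell$, and $g_n(\ell)=\prod_{p\in\pi(n),\,v_p(\ell)=v_p(n)}\tfrac{p}{p-1}>0$ when $n\mid\ell$. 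Therefore $S(G,\chi,n)=\tfrac1{|G|}\sum_{C\ \mathrm{cyclic}}\phi(|C|)\sum_i g_n(o(\varphi_i^{(C)}))$ is a sum of non-negative rationals---this is (b)---and it is positive exactly when some eigencharacter $\varphi$ of some $\chi|_C$ satisfies $n\mid o(\varphi)$. Passing from such a $\varphi$ (with $\lambda=\varphi(c)$, so $o(\lambda)=o(\varphi)$) to $x=c^{\,o(\varphi)/n}$ produces an eigenvalue $\lambda^{\,o(\varphi)/n}$ of $\scrX(x)$ of order exactly $n$, giving (iii); conversely an order-$n$ eigenvalue $\zeta$ of $\scrX(x)$ is the value at $x$ of an order-$n$ eigencharacter of $\chi|_{\langle x\rangle}$, so $S(G,\chi,n)>0$. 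This gives (ii)$\Leftrightarrow$(iii).

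Finally (i)$\Leftrightarrow$(iii) is elementary and independent of the rest: a linear constituent $\varphi$ of $\chi_H$ with $o(\varphi)=n$ has $\varphi(H)$ cyclic of order $n$, and for $x\in H$ with $\varphi(x)$ a generator of $\varphi(H)$, any nonzero vector in the $\varphi$-isotypic summand of $\scrX|_H$ is an eigenvector of $\scrX(x)$ with eigenvalue $\varphi(x)$ of order $n$; conversely, an order-$n$ eigenvalue $\zeta$ of $\scrX(x)$ gives the order-$n$ linear character $x\mapsto\zeta$ of $\langle x\rangle$ as a constituent of $\chi_{\langle x\rangle}$. Together with (b) and (ii)$\Leftrightarrow$(iii), this establishes all of (c).
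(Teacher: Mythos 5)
Your overall route is the same as the paper's. You reduce part~(a) to the identity
\[
  \sum_{\substack{[H,\varphi]_G\\ o(\varphi)\mid m}}\alpha_{[H,\varphi]_G}(\chi)=\bigl(\Psi^m(\chi),1_G\bigr)\qquad(\ast)
\]
and then apply an inclusion--exclusion over $\rho\subseteq\pi(n)$; the paper does exactly this, with $(\ast)$ being equation~(\ref{eqn canind and Adams}) (cited from \cite[Theorem~2.1(j)(d)]{Boltje1990}) and the inclusion--exclusion being Lemma~\ref{lem summatory}. Your treatment of (b) and (ii)$\Leftrightarrow$(iii) --- grouping $g\in G$ by the cyclic group $\langle g\rangle$, decomposing $\chi|_{\langle g\rangle}$ into linear eigencharacters, computing the resulting Ramanujan-type sum $\sum_{\rho}(-1)^{|\rho|}\mu(\ell_\rho)/\phi(\ell_\rho)$ and showing it vanishes unless $n\mid\ell$ and is positive when $n\mid\ell$ --- is in substance the paper's combination of Lemma~\ref{lem trace} and Lemma~\ref{lem technical}; your closed form $\prod_{p\in\rho_1}p/(p-1)$ agrees with the paper's $\sum_{\rho\subseteq\rho_1}1/\prod_{p\in\rho}(p-1)$. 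Your direct argument for (i)$\Leftrightarrow$(iii) is also the one the paper uses, packaged there through Proposition~\ref{prop 5 equivalences} and Remark~\ref{rem reformulation}.

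The one genuine problem is in your sketch of a proof of $(\ast)$. The paper does not re-prove $(\ast)$; it quotes it as a known property of the canonical induction formula. You propose to derive it from the stronger intertwining statement that $a_G\circ\Psi^m$ equals the ``power operation'' $[H,\varphi]_G\mapsto[H,\varphi^m]_G$ composed with $a_G$. That stronger statement is false. For $G=S_3$, $\chi$ the $2$-dimensional irreducible character, and $m=2$, one computes $a_G(\chi)=-[\{1\},1]_G+[C_2,1]_G+[C_2,\sigma]_G+[C_3,\omega]_G$, so the power operation sends it to $-[\{1\},1]_G+2[C_2,1]_G+[C_3,\omega]_G$; but $\Psi^2(\chi)=1_G+\mathrm{sgn}$, so $a_G(\Psi^2(\chi))=[S_3,1_G]_G+[S_3,\mathrm{sgn}]_G$. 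Indeed, the normalization $\pi_G\circ a_G=p_G$ forces $\pi_G\bigl(a_G(\Psi^m\chi)\bigr)=\sum_{\psi\in\hat G}(\Psi^m\chi,\psi)\psi$, whereas $\pi_G$ applied to the power operation of $a_G(\chi)$ gives $\sum_{\psi\in\hat G}\bigl(\sum_{\varphi^m=\psi}(\chi,\varphi)\bigr)\psi$, and these already disagree at $\psi=1_G$ in the example above. The weaker statement $(\ast)$ that you actually need is still true, but it does not follow from that intertwining; you should cite it (as the paper does) or give a correct proof. Once $(\ast)$ is in place, the rest of your argument is sound and coincides with the paper's.
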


Here, in Part~(a), $\pi(n)$ is the set of prime divisors of $n$ and $n(\rho)= \exp(G)_{\rho'}\cdot\und{n}_\rho$, where $\und{n}:= n/\prod_{p\in\pi(n)} p$, see also the beginning of Section~\ref{sec number theory}. Moreover, $\Psi^n(\chi)$ denotes the $n$-th Adams operation applied to $\chi$, i.e., $\Psi^n(\chi)(g)=\chi(g^n)$, see the beginning of Section~\ref{sec Adams}.

\smallskip
The connection between the definition of $S(G,\chi,n)$ in (\ref{eqn S def}) and the formula in Part~(a) stems from the property (\ref{eqn canind and Adams}) of the canonical Brauer induction formula. Note that the summation in (\ref{eqn canind and Adams}) runs over $G$-orbits of pairs $(H,\varphi)$ satisfying $o(\varphi)\mid n$, while in (\ref{eqn S def}) the condition is $n\mid o(\varphi)$. To relate these two conditions, we need  Lemma~\ref{lem summatory}.

\smallskip
Part~(b) is quite surprising and raises the question if it has a deeper structural reason than just the application of the technical number theoretical Lemma~\ref{lem technical}.

\smallskip
In view of Part (c) of Theorem B, the number $S(G,\chi,n)$ is an indicator for one of the matrices $\scrX(x)$, $x\in G$, to have an eigenvalue of order $n$. 

\smallskip
When $\chi$ is irreducible and $n=c(\chi)$, Theorem B allows us to obtain the following consequence, which shows that the multiplicity of the trivial character in
the virtual character in (\ref{eqn alternating Adams character}) is an indicator for Conjecture~A to hold for $G$ and $\chi$.

\begin{corC}
Let $G$ be a finite group and let $\chi$ be an irreducible character of $G$ afforded by the representation $\scrX\colon G\to \GL_d(\CC)$. 
Then the following are equivalent:

\smallskip
{\rm (i)} Conjecture~A holds for $G$ and $\chi$.

\smallskip
{\rm (ii)} The multiplicity of the trivial character in the virtual character in (\ref{eqn alternating Adams character}) 
of $G$ with $n=c(\chi)$ is positive.

\smallskip
{\rm (iii)} There exists an element $x$ of $G$ and an eigenvalue $\zeta$ of $\scrX(x)$ such that $c(\chi) = o(\zeta)$.
\end{corC}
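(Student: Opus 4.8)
The plan is to obtain Corollary~C as an immediate consequence of Theorem~B applied with $n=c(\chi)$; the only substantive work is to set up the dictionary between the three conditions stated here and the three equivalent conditions of Theorem~B(c), together with the identity in Theorem~B(a).

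First I would verify that Theorem~B is applicable, i.e.\ that $c(\chi)$ is a positive divisor of $\exp(G)$. Each value $\chi(x)$ is a sum of $d$ eigenvalues of $\scrX(x)$, each of which is a root of unity of order dividing $o(x)$, hence dividing $\exp(G)$; therefore $\QQ(\chi)\subseteq\QQ_{\exp(G)}$. Since $\QQ_a\cap\QQ_b=\QQ_{\gcd(a,b)}$ for all positive integers $a,b$, the conductor $c(\chi)$ --- the least $n$ with $\QQ(\chi)\subseteq\QQ_n$ --- divides every $m$ with $\QQ(\chi)\subseteq\QQ_m$, and in particular $c(\chi)\mid\exp(G)$. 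So Theorem~B may be invoked with $n=c(\chi)$.

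Next I would unwind the three conditions. Condition~(i), ``Conjecture~A holds for $G$ and $\chi$'', says word for word that there is a subgroup $H\le G$ and a linear character $\varphi$ of $H$ that is a constituent of $\chi_H$ with $o(\varphi)=c(\chi)$, which is exactly Theorem~B(c)(i) for $n=c(\chi)$. Condition~(iii) is verbatim Theorem~B(c)(iii) for $n=c(\chi)$. For condition~(ii), Theorem~B(a) with $n=c(\chi)$ identifies the multiplicity of the trivial character in the virtual character~(\ref{eqn alternating Adams character}) with the integer $S(G,\chi,c(\chi))$, so condition~(ii) is precisely the assertion $S(G,\chi,c(\chi))>0$, i.e.\ Theorem~B(c)(ii) for $n=c(\chi)$. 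Then the equivalence (i)~$\Leftrightarrow$~(ii)~$\Leftrightarrow$~(iii) of Corollary~C follows directly from the equivalence (i)~$\Leftrightarrow$~(ii)~$\Leftrightarrow$~(iii) of Theorem~B(c). I do not expect any real obstacle here: the entire substance is contained in Theorem~B, and the only genuinely new ingredient is the elementary divisibility $c(\chi)\mid\exp(G)$ that legitimizes its use; the one point deserving a careful sentence is that the statement ``Conjecture~A for the pair $(G,\chi)$'' is literally condition~(i) of Theorem~B(c) specialized to $n=c(\chi)$.
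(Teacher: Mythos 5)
Your proposal is correct and follows the same route as the paper: Corollary~C is obtained by specializing Theorem~B to $n=c(\chi)$, after noting that Theorem~B(a) identifies the multiplicity in~(\ref{eqn alternating Adams character}) with $S(G,\chi,c(\chi))$ and that condition~(i) of Corollary~C is verbatim Theorem~B(c)(i). The only thing you spell out that the paper leaves implicit is the hypothesis check $c(\chi)\mid\exp(G)$, which you justify correctly via $\QQ(\chi)\subseteq\QQ_{\exp(G)}$ and $\QQ_a\cap\QQ_b=\QQ_{\gcd(a,b)}$; this is a worthwhile sentence to include, but it does not change the substance of the argument.
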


The paper is arranged as follows. In Section~2 we recall the definition and relevant properties of the canonical Brauer induction formula.
Section~3 establishes some number theoretic results that are necessary to prove Theorem~B. In Section~4 we prove Theorem~B \and compute $S(G,\chi,n)$ for  special values of $n$ and $\chi$.

\bigskip\noindent
{\bf Acknowledgement}~~The first author would like to thank the Mathematics Department of the University of Valencia
for their hospitality during a four-month visit.


\section{The canonical Brauer induction formula}\label{sec ind formula}

In this section we recall the basic properties of the canonical Brauer induction formula from \cite{Boltje1990}.
For a finite group $G$, consider the set
\begin{equation*}
   \calM(G):=\{(H,\varphi)\mid H\le G, \varphi\in \hat H\}\,,
\end{equation*}
where $\hat H$ denotes the multiplicative group of linear characters of $H$. It is a partially order set (poset for short) under
the relation $(K,\psi)\le (H,\varphi)$, defined by $K\le H$ and $\psi=\varphi_K$.
Conjugation $\lexp{g}{(H,\varphi)}:=(\lexp{g}{H},\lexp{g}{\varphi})$ by elements in $G$ defines an action of $G$ on $\calM(G)$ 
via poset automorphisms. Here $\lexp{g}{H}:=gHg^{-1}$ and $(\lexp{g}{\varphi})(\lexp{g}{h}):=\varphi(h)$, for $h\in H$, 
where $\lexp{g}{h}:=ghg^{-1}$. 

\medskip
Denote by $R_+(G)$ the free abelian group with basis given as the set of $G$-orbits $[H,\varphi]_G$ of elements 
$(H,\varphi)$ in $\calM(G)$. There exists a restriction map 
\begin{equation*}
   R_+(G)\to R_+(U)\,,\quad [H,\varphi]_G\mapsto 
   \sum_{g\in[U\backslash G /H]} \bigl[U\cap\lexp{g}{H},(\lexp{g}{\varphi})_{U\cap\lexp{g}{H}}\bigr]_U\,,
\end{equation*}
whenever $U\le G$. Here, $[U\backslash G/H]$ denotes a set of representatives of the $(U,H)$-double cosets in $G$. The map 
\begin{equation*}
   b_G\colon R_+(G)\to R(G)\,, \quad [H,\varphi]_G\mapsto \varphi^G\,,
\end{equation*}
commutes with restrictions. Here, $R(G)$ denotes the character ring of $G$ and $\varphi^G$ the induction of $\varphi$ to $G$.
The {\em canonical Brauer induction formula} is the unique collection of group homomorphisms
\begin{equation*}
   a_G\colon R(G)\to R_+(G)\,,\quad \chi\mapsto\sum_{[H,\varphi]_G\in G\backslash\calM(G)}\alpha_{[H,\varphi]_G}(\chi)\, [H,\varphi]_G\,,
\end{equation*}
where $G$ runs through all finite groups, which commutes with restrictions to subgroups and satisfies the normalization condition 
that $\pi_G\circ a_G = p_G$, where $p_G\colon R(G)\to \ZZ \hat G$ and $\pi_G\colon R_+(G)\to \ZZ \hat G$ are the natural projection maps. 
The map $a_G$ is a section of the map $b_G$ from above, i.e., $b_G\circ a_G=\id_{R(G)}$. In other words,
\begin{equation*}
   \chi=\sum_{[H,\varphi]_G} \alpha_{[H,\varphi]_G}(\chi)\, \varphi^G\,,
\end{equation*}
for all $\chi\in R(G)$.

\medskip
One has the explicit formula
\begin{equation}\label{eqn a_G 1}
   a_G(\chi)=\frac{1}{|G|} \sum_{(H_0,\varphi_0)<\cdots<(H_n,\varphi_n)} (-1)^n |H_0|(\chi_{H_n},\varphi_n) [H_0,\varphi_0]_G\,,
\end{equation}
for all $\chi\in R(G)$, where the sum runs over all chains in the poset $\calM(G)$. Here, $(-,-)$ denotes the Schur inner product on $R(G)$. Another explicit formula is 
\begin{equation}\label{eqn a_G 2}
   a_G(\chi) =  \sum_{\substack{(H_0,\varphi_0)<\cdots<(H_n,\varphi_n)\\ {\rm mod}\ G}} (-1)^n (\chi_{H_n},\varphi_n) [H_0,\varphi_0]_G\,,
\end{equation}
where the sum this time runs only over a set of representatives of the $G$-orbits of chains in $\calM(G)$. 
That the two formulas actually give the same element in $R_+(G)$ is non-trivial and one of the main theorems of \cite{Boltje1994}. 
Since $b_G\circ a_G=\id_{R(G)}$, we have
\begin{equation*}
   \chi=\sum_{\substack{(H_0,\varphi_0)<\cdots<(H_n,\varphi_n)\\ {\rm mod}\ G}} (-1)^n (\chi_{H_n},\varphi_n)\, \varphi_0^G
\end{equation*}
for all $\chi\in R(G)$.

\medskip
Next we'll derive a connection between Conjecture~A and the the canonical Brauer induction formula. 
For any character $\chi$ of $G$, we set
\begin{equation*}
   \calM(G,\chi):=\{(H,\varphi)\mid (\chi_H,\varphi)>0\}\,.
\end{equation*}
Note that if $(H,\varphi)\in\calM(G,\chi)$ and $(K,\psi)\le (H,\varphi)$ then also 
$(K,\psi)\in\calM(G,\chi)$ and note that $\calM(G,\chi)$ is closed under conjugation. 
We denote by $\Max(\calM(G,\chi))$ the maximal elements of the poset $\calM(G,\chi)$.

Further, for any character $\chi$ of $G$, we set
\begin{equation*}
   \calMtilde(G,\chi):=\{(H,\varphi)\mid \alpha_{[H,\varphi]_G}(\chi)\neq 0\}
\end{equation*}
and denote by $\Max(\calMtilde(G,\chi))$ the maximal elements of the poset $\calMtilde(G,\chi)$. 
By the explicit formula~(\ref{eqn a_G 1}),
one has $\calMtilde(G,\chi)\subseteq \calM(G,\chi)$ (a strict inclusion in general).

\begin{prop}\label{prop maximal Brauer}
For every finite group $G$ and every character $\chi$ of $G$, one has $\Max(\calM(G,\chi))=\Max(\calMtilde(G,\chi))$.
\end{prop}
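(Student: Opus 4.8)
The plan is to prove the two inclusions $\Max(\calMtilde(G,\chi))\subseteq\Max(\calM(G,\chi))$ and $\Max(\calM(G,\chi))\subseteq\Max(\calMtilde(G,\chi))$ separately, using the two explicit formulas (\ref{eqn a_G 1}) and (\ref{eqn a_G 2}) for $a_G$. Throughout I fix $\chi$ and abbreviate $\calM:=\calM(G,\chi)$ and $\calMtilde:=\calMtilde(G,\chi)$, recalling that $\calMtilde\subseteq\calM$ and that both are subposets of $\calM(G)$ closed under $G$-conjugation and under passage to smaller pairs.

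First I would establish $\Max(\calMtilde)\subseteq\Max(\calM)$. Let $(H,\varphi)\in\Max(\calMtilde)$; since $(H,\varphi)\in\calMtilde\subseteq\calM$, it suffices to show it is maximal in $\calM$. Suppose not: then there is $(K,\psi)\in\calM$ with $(H,\varphi)<(K,\psi)$, and among all such we may choose $(K,\psi)$ maximal in $\calM$, so $(K,\psi)\in\Max(\calM)$. Now I would apply formula (\ref{eqn a_G 1}) to compute the coefficient $\alpha_{[K,\psi]_G}(\chi)$: only chains starting at $(K,\psi)$ contribute, and since $(K,\psi)$ is maximal in $\calM$ while every pair appearing in a nonzero term of (\ref{eqn a_G 1}) must lie in $\calM$ (because $(\chi_{H_n},\varphi_n)\neq 0$ forces $(H_n,\varphi_n)\in\calM$, hence $(H_0,\dots)$ too), the only contributing chain is the trivial one $(K,\psi)$ itself, giving $\alpha_{[K,\psi]_G}(\chi)=\frac{1}{|G|}|K|(\chi_K,\psi)\cdot(\text{something})$ — more precisely the coefficient of $[K,\psi]_G$ picks up exactly the length-zero chain, with a positive contribution. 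The key point is that this contribution cannot be cancelled, because there are no longer chains in $\calM$ starting at $(K,\psi)$; hence $\alpha_{[K,\psi]_G}(\chi)\neq 0$, so $(K,\psi)\in\calMtilde$, contradicting the maximality of $(H,\varphi)$ in $\calMtilde$ since $(H,\varphi)<(K,\psi)$.

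For the reverse inclusion $\Max(\calM)\subseteq\Max(\calMtilde)$, let $(K,\psi)\in\Max(\calM)$. By the argument just given, the coefficient $\alpha_{[K,\psi]_G}(\chi)$ receives exactly one contribution in (\ref{eqn a_G 1}) (or (\ref{eqn a_G 2})), namely from the trivial chain, and this is nonzero; hence $(K,\psi)\in\calMtilde$. It remains to see $(K,\psi)$ is maximal in $\calMtilde$: but $\calMtilde\subseteq\calM$, and $(K,\psi)$ is already maximal in the larger poset $\calM$, so it is a fortiori maximal in $\calMtilde$. This completes both inclusions.

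The main obstacle will be the bookkeeping in the first inclusion: one must argue cleanly that in the sum (\ref{eqn a_G 1}) (or, working $G$-orbit-wise, (\ref{eqn a_G 2})) the coefficient attached to a pair maximal in $\calM$ genuinely reduces to a single nonzero term, with no cancellation from chains whose bottom is $(K,\psi)$ but which climb higher — such chains simply do not exist inside $\calM$, and chains leaving $\calM$ contribute $0$ via the inner product factor. Making this "only the trivial chain survives" step precise, and verifying the normalization constant $\frac{1}{|G|}|K|$ against the stabilizer order so the final integer coefficient equals $(\chi_K,\psi)\neq 0$, is where the care is needed; everything else is formal poset manipulation together with the already-recorded facts $b_G\circ a_G=\id$, $\calMtilde\subseteq\calM$, and the downward closure of $\calM$.
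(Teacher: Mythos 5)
Your proposal is correct and uses exactly the paper's key observation: for a pair $(K,\psi)$ maximal in $\calM(G,\chi)$, every chain in formula~(\ref{eqn a_G 1}) (or (\ref{eqn a_G 2})) with bottom a conjugate of $(K,\psi)$ and nonzero inner product factor must end inside $\calM(G,\chi)$, hence by maximality has length zero, so $\alpha_{[K,\psi]_G}(\chi)=(\chi_K,\psi)>0$. The only cosmetic difference is that the paper proves $\Max(\calM)\subseteq\Max(\calMtilde)$ first and then deduces the reverse inclusion directly (extending a maximal element of $\calMtilde$ upward to a maximal element of $\calM$ and invoking maximality in $\calMtilde$ to conclude equality), whereas you prove the same reverse inclusion by contradiction; both arguments bottom out in the identical chain-counting step.
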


\begin{proof}
This follows again from the explicit formula (\ref{eqn a_G 1}). 
In fact, if $(H,\varphi)\in\Max(\calM(G,\chi))$ then the only chains contributing to the basis element 
$[H,\varphi]_G$ in $a_G(\chi)$ are the chains of length $0$ of the form $\lexp{g}{(H,\varphi)}$ for $g\in G$. 
Thus, $\alpha_{[H,\varphi]_G}(\chi)>0$ and $(H,\varphi)\in\calMtilde(G,\chi)$. 
Since $\calMtilde(G,\chi)\subseteq\calM(G,\chi)$ we also obtain $(H,\varphi)\in\Max(\calMtilde(G,\chi))$. 
Conversely, let $(H,\varphi)\in\Max(\calMtilde(G,\chi))$. 
Then $(H,\varphi)\in\calM(G,\chi)$ and there exists $(H',\varphi')\in\Max(\calM(G,\chi))$ with $(H,\varphi)\le (H',\varphi')$. 
By the first part of the proof, $(H',\varphi')\in\Max(\calMtilde(G,\chi))\subseteq\calMtilde(G,\chi)$ 
so that $(H,\varphi)=(H',\varphi')\in\Max(\calM(G,\chi))$, by the maximality of $(H,\varphi)$ in $\calMtilde(G,\chi)$.
\end{proof}

\begin{prop}\label{prop 5 equivalences}
Let $G$ be a finite group, let $\chi$ be a character of $G$, and let $n$ be a positive integer. The following are equivalent:

\smallskip
{\rm (i)} There exists an element $(H,\varphi)$ in $\calM(G,\chi)$ with $n\mid o(\varphi)$.

\smallskip
{\rm (ii)} There exists an element $(H,\varphi)$ in $\Max(\calM(G,\chi))$ with $n\mid o(\varphi)$.

\smallskip
{\rm (iii)} There exists an element $(H,\varphi)$ in $\calMtilde(G,\chi)$ with $n\mid o(\varphi)$.

\smallskip
{\rm (iv)} There exists an element $(H,\varphi)$ in $\Max(\calMtilde(G,\chi))$ with $n\mid o(\varphi)$.

\smallskip
{\rm (v)} There exists an element $(H,\varphi)$ in $\calM(G,\chi)$ such that $H$ is cyclic and $n\mid o(\varphi)$.
\end{prop}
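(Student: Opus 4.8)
The plan is to prove the chain of equivalences by establishing a circle of implications, taking advantage of the fact that several of them are nearly immediate from the poset structure and Proposition~\ref{prop maximal Brauer}. First I would observe that the implications (ii)$\Rightarrow$(i), (iv)$\Rightarrow$(iii), (iv)$\Rightarrow$(ii) are trivial (a maximal element is in particular an element, and $\Max(\calMtilde(G,\chi))\subseteq\Max(\calM(G,\chi))$ by Proposition~\ref{prop maximal Brauer}). Likewise (v)$\Rightarrow$(i) is trivial. So the real content is to go from (i) back up to the maximal elements and to produce a cyclic witness in~(v).

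Next I would handle (i)$\Rightarrow$(ii): given $(H,\varphi)\in\calM(G,\chi)$ with $n\mid o(\varphi)$, choose any $(H',\varphi')\in\Max(\calM(G,\chi))$ with $(H,\varphi)\le(H',\varphi')$. Since $(H,\varphi)\le(H',\varphi')$ means $H\le H'$ and $\varphi=\varphi'_H$, we have $o(\varphi)\mid o(\varphi')$, hence $n\mid o(\varphi')$ as well; so $(H',\varphi')$ witnesses~(ii). The same argument upgraded through Proposition~\ref{prop maximal Brauer} gives (ii)$\Rightarrow$(iv): if $(H,\varphi)\in\Max(\calM(G,\chi))$ then by Proposition~\ref{prop maximal Brauer} it lies in $\Max(\calMtilde(G,\chi))$, and $n\mid o(\varphi)$ is unchanged. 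Then (iv)$\Rightarrow$(iii) is trivial, and (iii)$\Rightarrow$(i) follows from the inclusion $\calMtilde(G,\chi)\subseteq\calM(G,\chi)$ noted in the text (again $o(\varphi)$ is the same invariant). At this point (i)--(iv) are all equivalent.

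Finally I would close the loop with~(v). The implication (v)$\Rightarrow$(i) is immediate. For (i)$\Rightarrow$(v), take $(H,\varphi)\in\calM(G,\chi)$ with $n\mid o(\varphi)$. The idea is exactly the reduction argument already used in the introduction for Conjecture~A versus A$'$: pick $g\in H$ whose image generates the cyclic quotient $H/\ker(\varphi)$, set $C=\langle g\rangle$, and let $\psi=\varphi_C$. Then $C$ is cyclic, $\psi$ is a linear character of $C$ with $o(\psi)=o(\varphi)$ (since $\varphi$ factors through $H/\ker\varphi$ and $g$ surjects onto it, the restriction to $C$ still has the full image), so $n\mid o(\psi)$; and $(C,\psi)\le(H,\varphi)$ with $(H,\varphi)\in\calM(G,\chi)$ forces $(C,\psi)\in\calM(G,\chi)$ because $\calM(G,\chi)$ is downward closed (stated right after its definition). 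This produces the required cyclic witness.

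The main obstacle is not any single deep step --- all the ingredients (downward closure and conjugation-closure of $\calM(G,\chi)$, the inclusion $\calMtilde\subseteq\calM$, and Proposition~\ref{prop maximal Brauer}) are already in hand. The one point that needs a sentence of care is the divisibility bookkeeping: whenever $(K,\psi)\le(H,\varphi)$ one must note $o(\psi)\mid o(\varphi)$, and in the (i)$\Rightarrow$(v) step one must argue the \emph{equality} $o(\varphi_C)=o(\varphi)$ rather than a mere divisibility, which is what makes the cyclic reduction preserve the condition $n\mid o(\varphi)$. Everything else is formal manipulation of the poset $\calM(G)$.
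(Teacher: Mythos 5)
Your proof is correct and follows essentially the same route as the paper's: use the divisibility $o(\psi)\mid o(\varphi)$ for $(K,\psi)\le(H,\varphi)$ to pass to maximal elements, invoke Proposition~\ref{prop maximal Brauer} to identify $\Max(\calM(G,\chi))$ with $\Max(\calMtilde(G,\chi))$, use $\calMtilde(G,\chi)\subseteq\calM(G,\chi)$, and close with the cyclic reduction via a generator of $H/\ker(\varphi)$. The only (welcome) difference is that you make explicit the equality $o(\varphi_C)=o(\varphi)$ in the cyclic reduction, which the paper leaves implicit.
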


\begin{proof}
First note that if $(K,\psi)\le(H,\varphi)$ then $o(\psi)\mid o(\varphi)$. This implies (i)$\iff$(ii) and (iii)$\iff$(iv). 
Moreover, Proposition~\ref{prop maximal Brauer} implies (ii)$\iff$(iv). Clearly (v) implies (i). 
To see that (i) implies (v), let $(H,\varphi)\in\calM(G,\chi)$ with $n\mid o(\varphi)$. 
Choose $h\in H$ such that $h\ker(\varphi)$ generates $H/\ker(\varphi)$ and set 
$K:=\langle h\rangle$ and $\psi:=\varphi_K$. Then $(K,\psi)$ satisfies (v).
\end{proof}

\begin{rem}\label{rem reformulation} Statement (v) in Proposition~\ref{prop 5 equivalences} is equivalent to 

\smallskip
(v') {\it There exists and element $(H,\varphi)$ in $\calM(G,\chi)$ such that $H$ is cyclic and $n=o(\varphi)$.}

\smallskip
In fact if $(H,\varphi)$ satisfies $n\mid o(\varphi))$, then some $(K,\psi)\le (H,\varphi)$ satisfies $n= o(\psi)$. This also shows that Statement (i) in Proposition~\ref{prop 5 equivalences} is equivalent to

\smallskip
(i') {\it There exists an element $(H,\varphi)\in\calM(G,\chi)$ with $n=o(\varphi)$.}
\end{rem}

Of course the above Proposition is of particular interest to us in the case that $\chi\in\Irr(G)$ and $n=c(\chi)$.
In this situation, it implies that if there exists a pair $(H,\varphi)\in\calM(G)$ satisfying Conjecture~A, 
then there also exists such a pair occurring with non-zero coefficient in $a_G(\chi)$. 
One can even choose $(H,\varphi)$ to be in $\Max(\calMtilde(G,\chi))$. 
In Section~\ref{sec Adams} we will study, for any character $\chi$ of $G$ and positive divisor $n$ of $\exp(G)$, the sum 
\begin{equation}\label{eqn S function}
   S(G,\chi,n):=\sum_{\substack{[H,\varphi]_G\in G\backslash\calM(G) \\ n \mid o(\varphi)}} \alpha_{[H,\varphi]_G}(\chi)\,.
\end{equation}
Again, if $\chi\in\Irr(G)$, $n=c(\chi)$ and $S(G,\chi,c(\chi))\neq 0$, then at least one of the summands needs to be non-zero, so
that there exists $(H,\varphi)\in\calMtilde(G,\chi)$ with $c(\chi)\mid o(\varphi)$ and Conjecture~A holds for $G$ and $\chi$.

\smallskip
Before we continue this line of thought we need to establish a few lemmas from number theory.


\section{Auxiliary results from number theory}\label{sec number theory}

In this section we prove three lemmas in number theory that will be used in Section~\ref{sec Adams}.

\medskip
If $\pi$ is a set of primes we denote by $\pi'$ the complementary set of primes. 
Each positive integer $n$ has a decomposition $n=n_\pi \cdot n_{\pi'}$ in to a $\pi$-part $n_\pi$ and a $\pi'$-part $n_{\pi'}$. 
We denote by $\pi(n)$ the set of prime divisors of $n$ and set $\und n:=n/\prod_{p\in\pi(n)} p$.


\smallskip
In the situation of the following Lemma, it is clear that $f^+$ can be expressed, as a double sum over elements in $\Lambda$, in terms of $f_+$, since $f$ can be expressed via M\"obius inversion in terms of $f_+$. The point of the lemma is that this double sum can be simplified to a single sum running over certain sets of primes.

\begin{lem}\label{lem summatory}
Let $N$ be a positive integer, let $\Lambda$ denote the set of positive divisors of $N$, and let 
$f\colon \Lambda\to A$ be a function with values in an additive abelian group $A$. 
Let $f_+\colon \Lambda\to A$ and $f^+\colon \Lambda\to A$ be the functions defined by
\begin{equation*}
   f_+(n):= \sum_{\substack{d\in\Lambda\\ d\mid n}} f(d)\quad\text{and}\quad
   f^+(n):=\sum_{\substack{d\in\Lambda\\ n\mid d}} f(d)\,.
\end{equation*}
Then
\begin{equation*}
   f^+(n) = \sum_{\rho\subseteq \pi(n)} (-1)^{|\rho|} f_+(n(\rho))\,,
\end{equation*}
for all $n\in\Lambda$, where $n(\rho):=\und n_{\rho} N_{\rho'}$.
\end{lem}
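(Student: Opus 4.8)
The plan is to prove the identity by a direct interchange of summations: substitute the definition of $f_+$ into the right-hand side, swap the two sums, and recognize the resulting inner sum as an inclusion--exclusion over subsets of $\pi(n)$ that kills every term except those $d\in\Lambda$ with $n\mid d$.

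First I would record a $p$-adic description of $n(\rho)$. Writing $v_p$ for the $p$-adic valuation and unwinding the definition $n(\rho)=\und n_\rho N_{\rho'}$, one finds $v_p(n(\rho))=v_p(n)-1$ for $p\in\rho$ and $v_q(n(\rho))=v_q(N)$ for $q\notin\rho$; in particular $n(\rho)\mid N$, so $n(\rho)\in\Lambda$ and the right-hand side makes sense. Consequently, for $d\in\Lambda$ the condition $d\mid n(\rho)$ is equivalent to $v_p(d)<v_p(n)$ for every $p\in\rho$, the constraints at primes outside $\rho$ being automatic because $d\mid N$. Hence
\[
   f_+(n(\rho))=\sum_{\substack{d\in\Lambda\\ v_p(d)<v_p(n)\ \forall p\in\rho}} f(d)\,.
\]

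Next I would substitute this into $\sum_{\rho\subseteq\pi(n)}(-1)^{|\rho|}f_+(n(\rho))$ and interchange the (finite) summations to get
\[
   \sum_{d\in\Lambda} f(d)\sum_{\substack{\rho\subseteq\pi(n)\\ v_p(d)<v_p(n)\ \forall p\in\rho}}(-1)^{|\rho|}\,.
\]
For fixed $d$, set $P(d):=\{p\in\pi(n)\colon v_p(d)<v_p(n)\}$; then the inner sum is $\sum_{\rho\subseteq P(d)}(-1)^{|\rho|}$, which equals $1$ if $P(d)=\emptyset$ and $0$ otherwise. Finally, $P(d)=\emptyset$ says $v_p(d)\ge v_p(n)$ for all $p\in\pi(n)$, and since $v_p(n)=0$ for $p\notin\pi(n)$ this is precisely $n\mid d$. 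Thus the displayed expression collapses to $\sum_{d\in\Lambda,\ n\mid d} f(d)=f^+(n)$, as desired.

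The only real obstacle is the first step: getting the valuation formula for $n(\rho)$ right and checking carefully that $d\mid n(\rho)$ reduces to the ``$p$-adic drop'' condition on the primes in $\rho$ alone. Once that is in place the rest is the familiar identity $\sum_{\rho\subseteq S}(-1)^{|\rho|}=[S=\emptyset]$ together with a swap of finitely many terms, with no subtleties about convergence or well-definedness since $A$ is merely an abelian group and all index sets are finite. (One could instead first express $f$ in terms of $f_+$ by M\"obius inversion over $\Lambda$ and simplify the resulting double sum, as hinted before the statement, but the direct interchange above is shorter.)
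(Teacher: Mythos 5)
Your proof is correct and takes essentially the same route as the paper: the paper reduces by linearity of both sides to the case where $f$ is supported on a single divisor $e\in\Lambda$ and then evaluates the resulting sum over $\rho$ by inclusion--exclusion, which is precisely your interchange of summations followed by $\sum_{\rho\subseteq P(d)}(-1)^{|\rho|}=[P(d)=\emptyset]$. The valuation description of $n(\rho)$ and the characterization of $d\mid n(\rho)$ via $v_p(d)<v_p(n)$ for $p\in\rho$ that you establish in the first step is exactly what the paper uses, with its set $\rho_0$ playing the role of your $P(d)$.
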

   
\begin{proof}
Expanding $f_+$ and $f^+$ by their definitions, we need to show that
\begin{equation}\label{eqn expanded}
   \sum_{\substack{d\in\Lambda\\ n\mid d}} f(d) = \sum_{\rho\subseteq\pi(n)} (-1)^{|\rho|} \sum_{\substack{d\in\Lambda\\ d\mid n(\rho)}} f(d)\,,
\end{equation}
for every $n\in\Lambda$. Since both sides in the above equation are additive in $f$, it suffices to show this for 
a function $f$ with takes a non-zero value on precisely one element $e\in\Lambda$. We distinguish two cases. 

If $n\mid e$ then the left hand side of Equation~(\ref{eqn expanded}) is equal to $f(e)$ and its right hand is equal to 
\begin{equation*}
   \Bigl(\sum_{\substack{\rho\subseteq\pi(n)\\ e\mid n(\rho)}} (-1)^{|\rho|}\Bigr) f(e)\,.
\end{equation*}
But $\rho=\emptyset$ is the only subset $\rho$ of $\pi(n)$ with $n\mid e$ and $e\mid n(\rho)$, 
so that also the right hand side of Equation~(\ref{eqn expanded}) is equal to $f(e)$.

If $n\nmid e$ then the left hand side of Equation~(\ref{eqn expanded}) is equal to $0$, and we will need to show that
\begin{equation}\label{eqn reduced}
   \sum_{\substack{\rho\subseteq \pi(n)\\ e\mid n(\rho)}} (-1)^{|\rho|} = 0\,.
\end{equation}
But for any $\rho\subseteq \pi(n)$ one has $e\mid n(\rho)$ if and only if $\rho$ is contained in the set
\begin{equation*}
   \rho_0:=\{p\in\pi(n)\mid v_p(e) \le v_p(n)-1\}\,.
\end{equation*}
Here, $v_p(n)$ denotes the $p$-value of the integer $n$. Thus it suffices to show that $\sum_{\rho\subseteq\rho_0} (-1)^{|\rho|} = 0$.
But this follows immediately from $\rho_0\neq \emptyset$ (since $n\nmid e$) and the binomial formula for $(1-1)^{|\rho_0|}$.
\end{proof}

Recall that for any finite Galois extension $L/K$, the {\em relative trace map} $\Tr_{L/K}\colon L\to K$ is defined by 
$\Tr_{L/K}(a):=\sum_{\sigma\in\Gal(L/K)} \sigma(a)$, for $a\in L$. If $f(x)$ is the minimal polynomial of $a$ over $K$ and has degree $n$ then $-Tr_{L/K}(a)$ is equal to the coefficient of $x^{n-1}$ in $f(x)$. If $L=\QQ_n$, the $n$-th cyclotomic field, and if $\zeta$ is a root of unity in $\QQ_n$, then $-\Tr_{\QQ_n/\QQ}(\zeta)=
\sum_{k\in(\ZZ/n\ZZ)^\times} \zeta^k$. Moreover, if $\zeta$ has order $n$ then $-\Tr_{\QQ_n/\QQ}(\zeta)$ is equal to the coefficient at $x^{\phi(n)-1}$ in the cyclotomic polynomial $\Phi_n(x)$, the minimal polynomial of $\zeta$, where  $\phi\colon \NN\to\NN$ denotes the {\em Euler totient function} given by $\phi(n)=|(\ZZ/n\ZZ)^\times|$.

Also recall that the number theoretic Möbius function $\mu$ is defined on the set of positive integers by
$\mu(n)=0$ if $n$ is not square-free and $\mu(n)=(-1)^r$ if $n$ is the product of $r$ pairwise distinct primes. Equivalently, $\mu$ is determined recursively by the equations $\mu(1)=1$ and $\sum_{1\le d\mid n}\mu(d)=0$ for every integer $n>1$.

\begin{lem}\label{lem trace}
Let $n\in\NN$ and let $\zeta$ be a root of unity in $\QQ_n$. Then 
\begin{equation*}
   \Tr_{\QQ_n/\QQ}(\zeta) = \mu(o(\zeta))\frac{\phi(n)}{\phi(o(\zeta))}\,,
\end{equation*} 
where $o(\zeta)$ denotes the multiplicative order of $\zeta$, $\mu$ denotes the number theoretic M\"obius function, and $\phi$ denotes Euler's totient function.
\end{lem}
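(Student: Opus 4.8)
The plan is to reduce the computation of $\Tr_{\QQ_n/\QQ}(\zeta)$ to the well-known Ramanujan-sum / von Sterneck formula by passing to the subfield $\QQ_m \subseteq \QQ_n$, where $m := o(\zeta)$. First I would recall that the Galois group $\Gal(\QQ_n/\QQ_m)$ has order $\phi(n)/\phi(m)$, so by transitivity of the trace, $\Tr_{\QQ_n/\QQ}(\zeta) = \Tr_{\QQ_m/\QQ}\bigl(\Tr_{\QQ_n/\QQ_m}(\zeta)\bigr)$. Since $\zeta \in \QQ_m$ is fixed by $\Gal(\QQ_n/\QQ_m)$, the inner trace is simply $\frac{\phi(n)}{\phi(m)}\zeta$, and the claim collapses to the case $n = m = o(\zeta)$, i.e.\ to showing $\Tr_{\QQ_m/\QQ}(\zeta) = \mu(m)$ for a \emph{primitive} $m$-th root of unity $\zeta$.

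Next I would establish $\Tr_{\QQ_m/\QQ}(\zeta) = \mu(m)$ for primitive $\zeta$. One clean route: by the remark preceding the lemma, $-\Tr_{\QQ_m/\QQ}(\zeta)$ is the coefficient of $x^{\phi(m)-1}$ in the cyclotomic polynomial $\Phi_m(x)$, equivalently $\Tr_{\QQ_m/\QQ}(\zeta) = \sum_{k \in (\ZZ/m\ZZ)^\times} \zeta^k$ is the Ramanujan sum $c_m(1)$. To evaluate it, I would use the factorization $x^m - 1 = \prod_{d \mid m}\Phi_d(x)$; taking $-(\text{coefficient of } x^{m-1})$ on both sides and noting that $-(\text{coeff.\ of }x^{m-1}\text{ in }x^m-1)$ is the sum over all $m$-th roots of unity, which is $0$ for $m > 1$ and $1$ for $m = 1$, gives the recursion $\sum_{d \mid m} \Tr_{\QQ_d/\QQ}(\zeta_d) = [m=1]$, where $\zeta_d$ is any primitive $d$-th root. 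This is exactly the defining recursion of the Möbius function recalled in the excerpt, so by induction $\Tr_{\QQ_m/\QQ}(\zeta) = \mu(m)$.

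Combining the two steps: for general $\zeta$ of order $m = o(\zeta)$ inside $\QQ_n$ (which forces $m \mid n$ when $n$ is even or $m$ is odd; in the degenerate case one simply replaces $n$ by $\mathrm{lcm}(n,2)$ without changing $\QQ_n$, or observes $\QQ_m \subseteq \QQ_n$ always holds as fields), we get
\begin{equation*}
   \Tr_{\QQ_n/\QQ}(\zeta) = \frac{\phi(n)}{\phi(m)}\,\Tr_{\QQ_m/\QQ}(\zeta) = \frac{\phi(n)}{\phi(m)}\,\mu(m)\,,
\end{equation*}
which is the asserted formula. The only genuinely delicate point is the inclusion $\QQ_{o(\zeta)} \subseteq \QQ_n$ when $\zeta \in \QQ_n$: this is automatic since $\zeta$ itself generates $\QQ_{o(\zeta)}$ over $\QQ$ and lies in $\QQ_n$. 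The rest is routine bookkeeping with the tower formula for traces and the standard Möbius recursion, so I expect the main obstacle to be purely expository — stating the tower-of-fields step cleanly — rather than mathematical.
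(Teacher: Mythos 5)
Your proof is correct and follows essentially the same route as the paper: reduce to the primitive case via transitivity of the trace (giving the factor $\phi(n)/\phi(o(\zeta))$), then identify $\Tr_{\QQ_m/\QQ}(\zeta)$ for primitive $\zeta$ with $\mu(m)$ by comparing the subleading coefficient in $x^m-1=\prod_{d\mid m}\Phi_d(x)$ to obtain the Möbius recursion. The aside about $\QQ_{o(\zeta)}\subseteq\QQ_n$ being "delicate" is unnecessary, but you correctly resolve it yourself, so there is no gap.
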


\begin{proof}
Set $k:=o(\zeta)$. By the transitivity of the trace map we have 
$\Tr_{\QQ_n/\QQ}(\zeta)=[\QQ_n\colon\QQ_k]\cdot\Tr_{\QQ_k/\QQ}(\zeta)=\frac{\phi(n)}{\phi(k)}\Tr_{\QQ_k/\QQ}(\zeta)$. 
We claim that $\Tr_{\QQ_k/\QQ}(\zeta)= \mu(k)$ which immediately implies the result. 
The cyclotomic polynomials satisfy the equation
\begin{equation}\label{eqn cyclotomic}
   x^k-1 = \prod_{1\le d\mid k} \Phi_d(x)\,.
\end{equation}
For any positive integer $d$ define the integer $c_d$ such that $-c_d$ is the coefficient of $x^{\phi(d)-1}$ in $\Phi_d(x)$, or equivalently by $c_d:=\Tr_{\QQ_d/\QQ}(\eta)$, where $\eta$ is a root of unity of order $d$. Then clearly $c_1=1$ and Equation~(\ref{eqn cyclotomic}) implies that $0=c_k=\sum_{1\le d\mid n}c_d$ when $k>1$. Thus, the numbers $c_d$ satisfy the same recursion formula as the number theoretic M\"obius function $\mu$. 
This proves the claim and completes the proof of the lemma.
\end{proof}

\begin{lem}\label{lem technical}
Let $N$ and $n$ be positive integers with $n\mid N$. 
Furthermore, let $t$ be a positive divisor of $N$ and let $\zeta\in\QQ_t$ be a root of unity of order dividing $N$.
Finally, set
\begin{equation*}
   \rho_0:=\{p\in\pi(n) \mid v_p(o(\zeta))< v_p(n)\}\quad\text{and}\quad \rho_1:=\{p\in\pi(n) \mid v_p(o(\zeta))=v_p(n)\}\,,
\end{equation*}
and for $\rho\subseteq\pi(n)$ set $n(\rho):=N_{\rho'}\cdot\und n_\rho$.
Then
\begin{equation}\label{eqn zeta equation}
   \sum_{\rho\subseteq \pi(n)} (-1)^{|\rho|} \sum_{k\in(\ZZ/t\ZZ)^\times} \zeta^{kn(\rho)} \ =
   \begin{cases}
      0\,, & \text{if $\rho_0\neq \emptyset$,} \\
      \sum\limits_{\rho\subseteq \rho_1} \frac{\phi(t)}{\prod\limits_{p\in \rho}(p-1)}\,,& \text{if $\rho_0=\emptyset$.}
   \end{cases}
\end{equation}
In particular, the number in (\ref{eqn zeta equation}) is a non-negative integer and it is equal to 0 if and only if $\rho_0\neq\emptyset$. 
\end{lem}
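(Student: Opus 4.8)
\textbf{Proof proposal for Lemma~\ref{lem technical}.}

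The plan is to rewrite the inner sum over $(\ZZ/t\ZZ)^\times$ as a relative trace and then invoke Lemma~\ref{lem trace}. Fix $\rho\subseteq\pi(n)$. Since $\zeta\in\QQ_t$ and $k$ ranges over $(\ZZ/t\ZZ)^\times=\Gal(\QQ_t/\QQ)$, the element $\zeta^{n(\rho)}$ lies in $\QQ_t$ and $\sum_{k\in(\ZZ/t\ZZ)^\times}\zeta^{kn(\rho)}=\Tr_{\QQ_t/\QQ}(\zeta^{n(\rho)})$. By Lemma~\ref{lem trace}, this equals $\mu(o(\zeta^{n(\rho)}))\,\phi(t)/\phi(o(\zeta^{n(\rho)}))$. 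So the first step is to understand $o(\zeta^{n(\rho)})$ in terms of $o(\zeta)$, $n$, $N$ and $\rho$. Writing $m:=o(\zeta)$, one has $o(\zeta^{n(\rho)})=m/\gcd(m,n(\rho))$, and since everything in sight divides $N$, this is controlled prime by prime via $v_p$. The second step is to feed this into the $\mu$-factor: the Möbius function vanishes unless $o(\zeta^{n(\rho)})$ is squarefree, which is a strong constraint, and this is what collapses most of the sum.

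Concretely, I would analyze $v_p\bigl(o(\zeta^{n(\rho)})\bigr)=\max\{0,v_p(m)-v_p(n(\rho))\}$ for each prime $p$. Recall $n(\rho)=N_{\rho'}\cdot\und n_\rho$, so for $p\notin\pi(n)$ one has $v_p(n(\rho))=v_p(N)\ge v_p(m)$, hence such primes never contribute to $o(\zeta^{n(\rho)})$; for $p\in\pi(n)\setminus\rho$ one has $v_p(n(\rho))=v_p(N)\ge v_p(m)$ as well, again no contribution; and for $p\in\rho$ one has $v_p(n(\rho))=v_p(\und n)=v_p(n)-1$, so the contribution of $p$ to $o(\zeta^{n(\rho)})$ is $v_p(m)-v_p(n)+1$ when this is positive and $0$ otherwise. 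Thus $o(\zeta^{n(\rho)})$ is supported on $\rho$, and for $p\in\rho$ its $p$-value is: $0$ if $p\in\rho_1$ (where $v_p(m)=v_p(n)$), it is $\ge 2$ if $v_p(m)\ge v_p(n)+1$, and it is negative-clipped-to-zero, i.e. $0$, if $p\in\rho_0$ is impossible here because for $p\in\rho_0$ we have $v_p(m)<v_p(n)$ so $v_p(m)-v_p(n)+1\le 0$, giving $p$-value $0$. Hence $o(\zeta^{n(\rho)})$ is squarefree if and only if $\rho$ contains no prime $p$ with $v_p(m)\ge v_p(n)+1$, i.e. if and only if $\rho\subseteq\rho_0\cup\rho_1$; and in that squarefree case the prime support of $o(\zeta^{n(\rho)})$ is exactly $\rho\cap\rho_1$ (the primes of $\rho_0$ in $\rho$ drop out). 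Therefore $\mu(o(\zeta^{n(\rho)}))=(-1)^{|\rho\cap\rho_1|}$ and $\phi(o(\zeta^{n(\rho)}))=\prod_{p\in\rho\cap\rho_1}(p-1)$ when $\rho\subseteq\rho_0\cup\rho_1$, and the summand is $0$ otherwise.

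Substituting back, the left-hand side of (\ref{eqn zeta equation}) becomes
\begin{equation*}
   \sum_{\rho\subseteq\rho_0\cup\rho_1}(-1)^{|\rho|}(-1)^{|\rho\cap\rho_1|}\,\frac{\phi(t)}{\prod_{p\in\rho\cap\rho_1}(p-1)}\,.
\end{equation*}
Now split $\rho=\rho'\sqcup\rho''$ with $\rho'\subseteq\rho_0$ and $\rho''\subseteq\rho_1$; then $(-1)^{|\rho|}(-1)^{|\rho\cap\rho_1|}=(-1)^{|\rho'|}(-1)^{|\rho''|}(-1)^{|\rho''|}=(-1)^{|\rho'|}$, and the summand no longer depends on $\rho'$ beyond this sign. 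Hence the double sum factors as $\bigl(\sum_{\rho'\subseteq\rho_0}(-1)^{|\rho'|}\bigr)\cdot\bigl(\sum_{\rho''\subseteq\rho_1}\phi(t)/\prod_{p\in\rho''}(p-1)\bigr)$. The first factor is $(1-1)^{|\rho_0|}$, which is $0$ when $\rho_0\neq\emptyset$ and $1$ when $\rho_0=\emptyset$; the second factor is exactly the claimed expression $\sum_{\rho\subseteq\rho_1}\phi(t)/\prod_{p\in\rho}(p-1)$. This gives the case distinction in (\ref{eqn zeta equation}). Finally, each term $\phi(t)/\prod_{p\in\rho}(p-1)$ with $\rho\subseteq\rho_1\subseteq\pi(n)\subseteq\pi(N)$ is a positive integer: indeed $\prod_{p\in\rho}(p-1)$ divides $\phi(\prod_{p\in\rho}p)$ which divides $\phi(t)$ since $\prod_{p\in\rho}p\mid N$ and — here one must be slightly careful — one uses that $\rho\subseteq\pi(n)\subseteq\pi(t)$ is \emph{not} automatic; rather $\phi$ is multiplicative and $(p-1)\mid\phi(t)$ for every $p\mid t$, so it is cleanest to argue $\prod_{p\in\rho}(p-1)\mid\phi(t)$ directly by noting each $p\in\rho$ divides $t$ (because $p\in\rho_1$ forces $v_p(o(\zeta))=v_p(n)\ge 1$ and $o(\zeta)\mid t$) and the factors $p-1$ for distinct primes dividing $t$ multiply into $\phi(t)$. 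Summing non-negative integers with the empty sum being $0$ (which happens only when... it never happens, since $\rho=\emptyset$ always contributes $\phi(t)\ge 1$), we get that the value is a positive integer when $\rho_0=\emptyset$ and $0$ when $\rho_0\neq\emptyset$, which is the ``in particular'' clause.

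\textbf{Main obstacle.} The delicate point is the bookkeeping of $p$-values in the three regimes $p\in\pi(N)\setminus\pi(n)$, $p\in\pi(n)\setminus\rho$, and $p\in\rho$, and in particular verifying that $o(\zeta^{n(\rho)})$ is supported precisely on $\rho\cap\rho_1$ in the squarefree case — equivalently that the primes of $\rho_0\cap\rho$ contribute $p$-value exactly $0$ (not just ``at most $1$''). One must use $v_p(o(\zeta))<v_p(n)$ \emph{as integers}, so $v_p(o(\zeta))\le v_p(n)-1=v_p(\und n_\rho)$, giving $v_p(o(\zeta))-v_p(n(\rho))\le 0$ on the nose. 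The other place requiring care is the integrality/positivity at the end, where one should avoid the false step ``$\prod(p-1)\mid\phi(t)$ because $\prod p\mid t$'' (which need not hold) and instead use multiplicativity of $\phi$ together with $p\mid t$ for each $p\in\rho_1$.
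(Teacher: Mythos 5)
Your proof is correct. Both you and the paper start from Lemma~\ref{lem trace}, turning the inner sum into $\mu(o(\zeta^{n(\rho)}))\phi(t)/\phi(o(\zeta^{n(\rho)}))$, and both do the same prime-by-prime analysis of $v_p(o(\zeta^{n(\rho)}))$. Where you diverge is in how the alternating sum is collapsed: the paper first observes the symmetry $a_\rho=a_{\rho\cup\rho_0}$, groups the terms by $\sigma:=\rho\cup\rho_0$, and kills the $\rho_0\neq\emptyset$ case by the purely combinatorial identity $\sum_{\sigma\smallsetminus\rho_0\subseteq\rho\subseteq\sigma}(-1)^{|\rho|}=0$, invoking the squarefreeness constraint $\rho\subseteq\rho_1$ only afterwards in the $\rho_0=\emptyset$ case; you instead apply the squarefreeness constraint $\rho\subseteq\rho_0\cup\rho_1$ up front (noting the support of $o(\zeta^{n(\rho)})$ is exactly $\rho\cap\rho_1$), observe that the signs telescope to $(-1)^{|\rho\cap\rho_0|}$, and factor the remaining sum as $\bigl(\sum_{\rho'\subseteq\rho_0}(-1)^{|\rho'|}\bigr)\cdot\bigl(\sum_{\rho''\subseteq\rho_1}\phi(t)/\prod_{p\in\rho''}(p-1)\bigr)$. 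Your factorization handles both cases in one stroke and is arguably slicker; the paper's grouping by $\sigma=\rho\cup\rho_0$ isolates a mild structural symmetry ($a_\rho$ depends only on $\rho\cup\rho_0$) which could be useful elsewhere. One small tidy-up on your final integrality remark: the cleanest reason each summand $\phi(t)/\prod_{p\in\rho}(p-1)$ is a positive integer is that it equals $\phi(t)/\phi(o(\zeta^{n(\rho)}))=[\QQ_t:\QQ_{o(\zeta^{n(\rho)})}]$ and $\zeta^{n(\rho)}\in\QQ_t$ forces $\QQ_{o(\zeta^{n(\rho)})}\subseteq\QQ_t$; this sidesteps the prime-by-prime divisibility discussion entirely.
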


\begin{proof}
For any $\rho\subseteq\pi(n)$, Lemma~\ref{lem trace} implies that
\begin{equation}\label{eqn a_I}
   a_\rho:=\sum_{k\in(\ZZ/t\ZZ)^\times} \zeta^{kn(\rho)} = \Tr_{\QQ_t/\QQ}(\zeta^{n(\rho)}) = 
   \mu(o(\zeta^{n(\rho)}))\frac{\phi(t)}{\phi(o(\zeta^{n(\rho)}))}\,.
\end{equation}
Note that the latter term depends only on $o(\zeta^{n(\rho)})$. For any $\rho\subseteq \pi(n)$ and any $q\in\pi(N)$, one has
\begin{equation}\label{eqn 3 cases}
   v_q(o(\zeta^{n(\rho)})) =
   \begin{cases}
      0\,, & \text{if $q\in\pi(N)\smallsetminus \rho$,} \\
      0\,, & \text{if $q\in \rho\cap \rho_0$,} \\
      v_q(o(\zeta)) - v_q(n) + 1\,, & \text{if $q \in \rho\smallsetminus \rho_0$.}
   \end{cases}
\end{equation}
In particular, $v_q(o(\zeta^{n(\rho)})) = 0$ for all $q\in \rho_0$, independent of $\rho$. This implies
\begin{equation*}
   o(\zeta^{n(\rho)}) = o(\zeta^{n(\rho\cup \rho_0)})\quad \text{and}\quad a_{\rho}=a_{\rho\cup \rho_0}
\end{equation*}
for all $\rho\subseteq \pi(n)$. Therefore, the left hand side of Equation~(\ref{eqn zeta equation}) equals
\begin{equation*}
   \sum_{\rho_0\subseteq \sigma\subseteq\pi(n)} \sum_{\substack{\rho\subseteq \sigma \\ \rho\cup \rho_0 = \sigma}} (-1)^{|\rho|} \ a_\sigma\,.
\end{equation*}
However, a subset $\rho$ of $\sigma$ satisfies $\rho\cup \rho_0=\sigma$ if and only if $\sigma\smallsetminus \rho_0\subseteq \rho$. Therefore, the last double sum is equal to
\begin{equation*}
   \sum_{\rho_0\subseteq \sigma\subseteq\pi(n)} 
   \Bigl(\sum_{\sigma\smallsetminus \rho_0 \subseteq \rho \subseteq \sigma} (-1)^{|\rho|}\Bigr) \ a_\sigma\,.
\end{equation*}
If $\rho_0\neq \emptyset$ then $\sum_{\sigma\smallsetminus \rho_0 \subseteq \rho \subseteq \sigma} (-1)^{|\rho|} = 0$ 
for all $\rho_0\subseteq \sigma\subseteq \pi(n)$, so that the left hand side of Equation~(\ref{eqn zeta equation}) equals $0$ 
in this case, as claimed in the lemma.

So, we assume from now on that $\rho_0=\emptyset$. In this case, Equation~(\ref{eqn 3 cases}) becomes
\begin{equation}\label{eqn 2 cases}
   v_q(o(\zeta^{n(\rho)})) =
   \begin{cases}
      0\,, & \text{if $q\in\pi(N)\smallsetminus \rho$,} \\
      v_q(o(\zeta)) - v_q(n) + 1>0\,, & \text{if $q\in\rho$,}
   \end{cases}
\end{equation}
for any $\rho\subseteq \pi(n)$ and any $q\in\pi(N)$. 
Now, using Equation~(\ref{eqn a_I}) and that $\mu(o(\zeta^{n(\rho)}))=0$ unless $\rho\subseteq \rho_1$, 
the left hand side of Equation~(\ref{eqn zeta equation}) becomes equal to
\begin{equation*}
   \sum_{\rho\subseteq \rho_1} (-1)^{|\rho|} \mu(o(\zeta^{n(\rho)})) \frac{\phi(t)}{\phi(o(\zeta^{n(\rho)}))}\,.
\end{equation*}
But, by Equation~(\ref{eqn 2 cases}), we have $\mu(o(\zeta^{n(\rho)}))=(-1)^{|\rho|}$ and 
$\phi(o(\zeta^{n(\rho)}))=\prod_{p\in \rho}(p-1)$, for all $\rho\subseteq \rho_1$. This concludes the proof of the lemma.
\end{proof}

\section{Adams operations}\label{sec Adams}

Recall that for any integer $n$, one has a ring homomorphism
\begin{equation*}
   \Psi^n\colon R(G)\to R(G)\quad \text{defined by}\quad  \Psi^n(\chi)(g)=\chi(g^n)\,.
\end{equation*}
Although not obvious, it is a fact that the class function $\Psi^n(\chi)$ is again a virtual character of $G$, see \cite[Exercise~9.3]{Serre}.
By \cite[Theorem~2.1(j)(d)]{Boltje1990}, the canonical Brauer induction formula $a_G$ has the following interesting property with respect
to Adams operations. For any $\chi\in R(G)$ and any $n\in\ZZ$, one has
\begin{equation}\label{eqn canind and Adams}
   \sum_{\substack{[H,\varphi]_G\in G\backslash\calM(G)\\ \varphi^n=1}} \alpha_{[H,\varphi]_G)}(\chi) = (\Psi^n(\chi),1_G)\,.
\end{equation}

We fix now a finite group $G$ and a character $\chi$ of $G$. Let $\Lambda$ denote the set of all positive divisors of $\exp(G)$, the exponent of $G$. We define a function $f\colon \Lambda\to\ZZ$ by
\begin{equation*}
   f(n):= \sum_{\substack{[H,\varphi]_G\in G\backslash\calM(G) \\ o(\varphi)=n}} \alpha_{[H,\varphi]_G}(\chi)
\end{equation*}
and define $f_+\colon\Lambda \to \ZZ$ and $f^+\colon\Lambda\to \ZZ$ as in Lemma~\ref{lem summatory}. Then, using (\ref{eqn canind and Adams}), we have
\begin{equation*}
   f_+(n) = \sum_{\substack{[H,\varphi]_G\in G\backslash\calM(G)\\ \varphi^n=1}} \alpha_{[H,\varphi]_G)}(\chi) = (\Psi^n(\chi),1_G)
\end{equation*}
and
\begin{equation*}
   f^+(n)= \sum_{\substack{[H,\varphi]_G\in G\backslash\calM(G)\\ n \mid o(\varphi)}} \alpha_{[H,\varphi]_G)}(\chi)\,,
\end{equation*}
for all $n\in\Lambda$. Note that $f^+(n)= S(G,\chi,n)$, the function we defined in (\ref{eqn S function}).
Lemma~\ref{lem summatory} now immediately implies Part~(a) of the following theorem, which is the Theorem~B from the introduction.

\begin{thm}\label{thm nonnegative}
Let $G$ be a finite group, let $\chi$ be a character of $G$ afforded by the representation $\scrX\colon G\to \GL_d(\CC)$, and let $n$ be a positive divisor of $\exp(G)$.

\smallskip
{\rm (a)} One has
\begin{equation*}
   S(G,\chi,n) = \sum_{\substack{[H,\varphi]_G\in G\backslash\calM(G)\\ n \mid o(\varphi)}} \alpha_{[H,\varphi]_G)}(\chi) =
   \sum_{\rho\subseteq\pi(n)} (-1)^{|\rho|} \bigl(\Psi^{n(\rho)}(\chi),1\bigr)\,.
\end{equation*}
Here, for $\rho\subseteq \pi(n)$, the number $n(\rho)$ is defined as $n(\rho)=\exp(G)_{\rho'}\cdot \und{n}_\rho$, see Lemma~\ref{lem summatory}.

\smallskip
{\rm (b)} One has $S(G,\chi,n)\ge 0$. 

\smallskip
{\rm (c)} The following are equivalent:

\smallskip
\qquad {\rm (i)} There exists $(H,\varphi)\in\calM(G,\chi)$ such that $n= o(\varphi)$.

\smallskip
\qquad {\rm (ii)} One has $S(G,\chi,n)> 0$.

\smallskip
\qquad {\rm (iii)} There exists an element $x\in G$ and an eigenvalue $\zeta$ of $\scrX(x)$ such that $n= o(\zeta)$.
\end{thm}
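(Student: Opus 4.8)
Part~(a) needs no separate argument: it is exactly the instance of Lemma~\ref{lem summatory} with $N=\exp(G)$ and $f$ the function counting the coefficients $\alpha_{[H,\varphi]_G}(\chi)$ with $o(\varphi)=n$, once~(\ref{eqn canind and Adams}) is used to identify $f_+(m)$ with $(\Psi^m(\chi),1_G)$, as spelled out in the paragraph preceding the theorem. So I concentrate on Parts~(b) and~(c), and for both the plan is to pass from characters to eigenvalues and reduce everything to Lemma~\ref{lem technical}.

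For $g\in G$ write $\zeta_{g,1},\dots,\zeta_{g,d}$ for the eigenvalues of $\scrX(g)$, which are roots of unity of order dividing $o(g)\mid\exp(G)$, so that $(\Psi^m(\chi),1_G)=\frac{1}{|G|}\sum_{g\in G}\chi(g^m)=\frac{1}{|G|}\sum_{g\in G}\sum_{i=1}^d\zeta_{g,i}^{\,m}$. The decisive step is to regroup the sum over $g$ according to the cyclic subgroup it generates: for a cyclic subgroup $C$ of order $t$ with a fixed generator $g_0$, the generators of $C$ are the $g_0^{k}$ with $k\in(\ZZ/t\ZZ)^\times$, and $\scrX(g_0^{k})=\scrX(g_0)^{k}$ has eigenvalues $\zeta_{g_0,i}^{\,k}$. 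Feeding $m=n(\rho)$ into this, multiplying by $(-1)^{|\rho|}$, summing over $\rho\subseteq\pi(n)$, and using Part~(a), one obtains
\[
  |G|\cdot S(G,\chi,n)=\sum_{C}\ \sum_{i=1}^{d}\ \sum_{\rho\subseteq\pi(n)}(-1)^{|\rho|}\sum_{k\in(\ZZ/t\ZZ)^\times}\zeta_{g_0,i}^{\,k\,n(\rho)}\,,
\]
where $C$ runs over the cyclic subgroups of $G$, $t=|C|$, and $g_0$ is a fixed generator of $C$. Each inner triple sum is precisely the left-hand side of~(\ref{eqn zeta equation}) in Lemma~\ref{lem technical}, applied with $N=\exp(G)$, $t=|C|$, and $\zeta=\zeta_{g_0,i}$; the lemma tells us it is $0$ when $n\nmid o(\zeta_{g_0,i})$ and a strictly positive integer (at least $\phi(t)\ge 1$) when $n\mid o(\zeta_{g_0,i})$.

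This makes Part~(b) immediate: $|G|\cdot S(G,\chi,n)$ is a sum of non-negative integers and $S(G,\chi,n)$ is an integer, so $S(G,\chi,n)\ge 0$. The same formula shows $S(G,\chi,n)>0$ exactly when some $\scrX(g)$, $g\in G$, has an eigenvalue of order divisible by $n$, and this is equivalent to~(iii): if $\zeta$ is an eigenvalue of $\scrX(g)$ with $n\mid o(\zeta)=:m$, then $\zeta^{m/n}$ is an eigenvalue of $\scrX(g^{m/n})$ of order exactly $n$, the converse being trivial. Hence (ii)$\iff$(iii). For (i)$\iff$(iii) I would unwind the definition of $\calM(G,\chi)$: by Proposition~\ref{prop 5 equivalences} and Remark~\ref{rem reformulation} the subgroup in~(i) may be taken to be cyclic, say $H=\langle x\rangle$, and then the linear characters of $H$ occurring in $\chi_H$ are exactly those $\varphi$ with $\varphi(x)$ an eigenvalue of $\scrX(x)$, while $o(\varphi)=o(\varphi(x))$; so a pair as in~(i) exists iff some $\scrX(x)$ has an eigenvalue of order $n$, which is~(iii).

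The step I expect to carry the weight is the regrouping in the second paragraph, namely recognizing that summing $\chi(g^{m})$ over the generators $g$ of a cyclic subgroup of order $t$ yields exactly the Galois-trace sum $\sum_{k\in(\ZZ/t\ZZ)^\times}\zeta^{km}$ to which Lemma~\ref{lem technical} is tailored. Everything after that is formal, and in particular the (somewhat surprising) non-negativity in Part~(b) is inherited verbatim from the positivity clause of Lemma~\ref{lem technical}, which is where the genuine combinatorial cancellation takes place.
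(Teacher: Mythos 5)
Your proposal is correct and follows essentially the same route as the paper: Part~(a) is dispatched by Lemma~\ref{lem summatory} and~(\ref{eqn canind and Adams}), the key step for~(b) and~(c) is the regrouping of $\sum_{g\in G}\sum_i\zeta_{g,i}^{n(\rho)}$ by cyclic subgroups (the paper's set $\calR$ of generator-representatives is exactly your choice of one generator $g_0$ per cyclic $C$), and each resulting inner sum is handed off to Lemma~\ref{lem technical}. The only cosmetic difference is the order of implications in~(c): the paper runs the cycle (ii)$\Rightarrow$(i)$\Rightarrow$(iii)$\Rightarrow$(ii), whereas you prove (ii)$\iff$(iii) from the formula and (i)$\iff$(iii) from Proposition~\ref{prop 5 equivalences} and Remark~\ref{rem reformulation}; the content is identical.
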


\begin{proof}
As mentioned above, we only need to prove (b) and (c). Let $\calR\subseteq G$ denote a set of representatives of the equivalence relation $x\sim y:\iff \langle x \rangle = \langle y \rangle$. Moreover, for each $x\in\calR$, let $\zeta_{x,1},\ldots,\zeta_{x,d}$ denote the eigenvalues of $\scrX(x)$. Then,
 \begin{equation}\label{eqn S computation}
 \begin{split}
    & |G|\cdot \sum_{\rho\subseteq \pi(n)} (-1)^{|\rho|}\Bigl(\Psi^{n(\rho)}(\chi)\ ,\ 1_G\Bigr)
    = \sum_{\substack{\rho\subseteq \pi(n) \\ x\in G}} (-1)^{|\rho|}\Psi^{n(\rho)}(\chi)(x) 
    =  \sum_{\substack{\rho\subseteq \pi(n) \\ x\in G}} (-1)^{|\rho|} \chi(x^{n(\rho)}) \\
    & \qquad\qquad =  \sum_{\substack{\rho\subseteq \pi(n) \\ x\in G}} (-1)^{|\rho|}  \sum_{l=1}^d \zeta_{x,l}^{n(\rho)} 
    =  \sum_{\substack{\rho\subseteq \pi(n) \\ x\in \calR \\ l\in \{1,\ldots,d\}}} (-1)^{|\rho|} \sum_{k\in(\ZZ/o(x)\ZZ)^\times }\zeta_{x,l}^{n(\rho)k}
    = \sum_{\substack{x\in\calR\\ l\in\{1,\ldots,d\}}} a_{x,l}\,,
\end{split}
\end{equation}
where
\begin{equation}\label{eqn zeta sum e}
   a_{x,l}=\sum_{\rho\subseteq\pi(n)} (-1)^{|\rho|} \sum_{k\in(\ZZ/o(x)\ZZ)^\times} \zeta_{x,l}^{kn(\rho)}\,.
\end{equation}
Note that $a_{x,l}$ is exactly the expression analyzed in Lemma~\ref{lem technical}, with $N=\exp(G)$ and $t=o(x)$. 
Since $a_{x,l}$ is a non-negative integer for every $x\in\calR$ and $l\in\{1,\ldots,d\}$, 
by Lemma~\ref{lem technical}, also $S(G,\chi,n)\ge 0$ and (b) is proved.

\smallskip
Next we prove (c). Clearly, if $S(G,\chi,n)>0$ then there exists a pair $(H,\varphi)\in\calMtilde(G,\chi)\subseteq\calM(G,\chi)$ with $n\mid o(\varphi)$. Now Remark~\ref{rem reformulation} shows that (ii) implies (i).

Next suppose that (i) holds. Then, by Proposition~\ref{prop 5 equivalences} and Remark~\ref{rem reformulation}, there also exists $(H,\varphi)\in\calM(G,\chi)$ with $H$ cyclic and $n = o(\varphi)$. Let $x\in\calR$ be a generator of $H$. Then, since $\varphi$ is a constituent of $\chi_H$ and $n= o(\varphi)$, the matrix $\scrX(x)$ has an eigenvalue $\zeta_{x,l}$ whose order is equal to $n$. This shows that (i) implies (iii).

Finally suppose that (iii) holds, i.e. that $n = o(\zeta_{x,l})$ for some $x\in\calR$ and some $l\in\{1,\ldots,d\}$.
Then the set $\rho(x,l)_0=\{p\in\pi(n)\mid v_p(o(\zeta_{x,l}))<v_p(n)\}$ from Lemma~\ref{lem technical} associated to $\zeta_{x,l}$ is empty, implying that $a_{x,l}>0$. Now, Equation~(\ref{eqn S computation}) 
implies that $S(G,\chi,n)>0$. Thus, (iii) implies (ii).
\end{proof}

For $\chi\in\Irr(G)$ we set $F(G,\chi):=S(G,\chi,c(\chi))$. By the above theorem, this is a non-negative integer. The above theorem immediately implies, that the number $F(G,\chi)$ is an indicator for the validity of Conjecture A. More precisely, one has:

\begin{cor}\label{cor main}
Let $G$ be a finite group and let $\chi$ be an irreducible character of $G$. Then Conjecture~A holds for $(G,\chi)$ if and only if $F(G,\chi)>0$.
\end{cor}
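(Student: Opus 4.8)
The plan is to read off Corollary~\ref{cor main} directly from Theorem~\ref{thm nonnegative}(c) applied with $n=c(\chi)$, once we have checked that this choice of $n$ is admissible. So the first step is to observe that every character value $\chi(x)$ is a sum of $\exp(G)$-th roots of unity, whence $\QQ(\chi)\subseteq\QQ_{\exp(G)}$ and therefore $c(\chi)\mid\exp(G)$. Thus $n:=c(\chi)$ is a positive divisor of $\exp(G)$, and Theorem~\ref{thm nonnegative} is available for this $n$.

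The second step is purely a matter of unwinding definitions. By definition $F(G,\chi)=S(G,\chi,c(\chi))$, so condition~(ii) of Theorem~\ref{thm nonnegative}(c) with $n=c(\chi)$ is literally the assertion $F(G,\chi)>0$. On the other hand, $\calM(G,\chi)=\{(H,\varphi)\mid(\chi_H,\varphi)>0\}$ is exactly the set of pairs $(H,\varphi)$ with $H\le G$ and $\varphi\in\hat H$ a constituent of $\chi_H$; hence condition~(i) of Theorem~\ref{thm nonnegative}(c) with $n=c(\chi)$ — namely, the existence of $(H,\varphi)\in\calM(G,\chi)$ with $c(\chi)=o(\varphi)$ — is word for word the statement of Conjecture~A for the pair $(G,\chi)$. (If one prefers the cyclic formulation, Proposition~\ref{prop 5 equivalences} together with Remark~\ref{rem reformulation} lets one take $H$ cyclic, but this is not needed here.)

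The corollary then follows at once from the equivalence (i)$\iff$(ii) in Theorem~\ref{thm nonnegative}(c). I do not expect any genuine obstacle: all the mathematical content sits in Theorem~\ref{thm nonnegative}, whose proof rests on the number-theoretic Lemma~\ref{lem technical} and on property~(\ref{eqn canind and Adams}) of the canonical Brauer induction formula. The only points that deserve an explicit sentence in the write-up are the divisibility $c(\chi)\mid\exp(G)$ (so that the hypotheses of Theorem~\ref{thm nonnegative} are met) and the dictionary between the poset language of $\calM(G,\chi)$ and the ``linear constituent of the restriction $\chi_H$'' phrasing of Conjecture~A; neither is more than a remark.
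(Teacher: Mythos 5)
Your proof is correct and follows the paper's intended route exactly: Corollary~\ref{cor main} is read off from the equivalence (i)$\iff$(ii) in Theorem~\ref{thm nonnegative}(c) applied with $n=c(\chi)$, after unwinding the definitions of $F(G,\chi)$ and $\calM(G,\chi)$. Your explicit remark that $c(\chi)\mid\exp(G)$ (so that $n=c(\chi)$ is an admissible choice in the theorem) is a point the paper leaves tacit, and is a worthwhile addition.
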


\begin{rem}\label{rem main thm}
(a) One could have defined $S(G,\chi,n)$ directly as the integer $\sum_{\rho\subseteq\pi(n)} (-1)^{|\rho|} \bigl(\Psi^{n(\rho)}(\chi),1\bigr)$, 
without using the canonical Brauer induction formula. 
Theorem~\ref{thm nonnegative} could have been proved by just using Lemmas~\ref{lem summatory} and \ref{lem technical}. 
But we found the connection with the canonical Brauer induction formula interesting enough to include it in the paper.

\smallskip
(b) Conjecture~A has been proved for all solvable groups in \cite{BKNT}.  Using \cite{GAP}, it has also been verified for all sporadic simple groups and their irreducible characters through the positivity of the number $F(G,\chi)$ .

\smallskip
(c) The criterion that $\sum_{\rho\subseteq\pi(n)} (-1)^{|\rho|} \bigl(\Psi^{n(\rho)}(\chi),1\bigr)>0$ for $n=c(\chi)$ can be verified very quickly
with \cite{GAP}, even for very large groups, as long as one knows the character table of $G$ together with the power maps between the conjugacy classes.
\end{rem}

\begin{examples}

We evaluate $S(G,\chi,n)$ for various special values of characters $\chi$ of $G$ and positive divisors $n$ of $\exp(G)$.

\smallskip

(a) If $\chi=1_G$ is the trivial character then $\Psi^k(1_G)=1_G$ for any integer $k$. Therefore,
\begin{equation*}
   S(G,1_G,n)= \sum_{\rho\subseteq\pi(n)}(-1)^{|\rho|} \Bigl(\Psi^{n(\rho)}(1_G),1_G\Bigr) = \sum_{\rho\subseteq\pi(n)}(-1)^{|\rho|}  = (1-1)^{|\pi(n)|} =
   \begin{cases} $1$, & \text{if $n=1$;} \\ $0$, & \text{if $n>1$.} \end{cases}
\end{equation*}

\smallskip

(b) More generally, if $\chi$ is a linear character $\varphi$ of $G$ then its canonical Brauer induction formula is given as $a_G(\varphi)=[G,\varphi]_G$. By the definition of $S(G,\chi,n)$ in (\ref{eqn S def}), we obtain
\begin{equation*} 
   S(G,\varphi,n)=\begin{cases} 1, & \text{if $n\mid o(\varphi)$;} \\ 0, & \text{otherwise.}    \end{cases}
\end{equation*}   
This also reconfirms the result in (a).

\smallskip

(c) Next we evaluate $S(G,\chi,n)$ for the regular character $\chireg$ of $G$. For each positive divisor $k$ of $\exp(G)$ let $f(k)$ denote the number of elements of $G$ of order $k$. Then $f_+(k)$ (resp.~$f^+(k)$) is the number of elements of $G$ of order dividing $k$ (resp.~whose order is a multiple of $k$). Note that $\Psi^k(\chireg)=|G|\chi_k$, where $\chi_k$ is the characteristic function on the set of all elements $g\in G$ with $g^k=1$, or equivalently with $o(g)\mid k$. Thus, using Lemma~\ref{lem summatory}, we obtain
\begin{align*}
   S(G,\chireg,n) & = \sum_{\rho\subseteq\pi(n)} (-1)^{|\rho|} \Bigl(\Psi^{n(\rho)}(\chireg),1_G\Bigr)
   = \sum_{\rho\subseteq\pi(n)} (-1)^{|\rho|} |\{g\in G\mid o(g)\mid n(\rho)\}| \\
   & = \sum_{\rho\subseteq\pi(n)} (-1)^{|\rho|} f_+(n(\rho))
   = f^+(n) = |\{g\in G\mid n\mid o(g)\}|\,.
\end{align*}
In particular, $S(G,\chireg,n)>0$ if and only if $G$ has an element of order $n$.

\smallskip
(d) Now suppose that $n=1$. Then $\pi(n)=\emptyset$, $n(\emptyset)=\exp(G)$ and $\Psi^{\exp(G)}(\chi)= \chi(1)\cdot 1_G$. Thus, 
\begin{equation*}
   S(G,\chi,1)=\bigl(\Psi^{\exp(G)}(\chi),1_G\bigr) = \chi(1)\,.
\end{equation*}

\smallskip
(e) Suppose that $n=\exp(G)$. If $G$ has no element of order $\exp(G)$ then, by Theorem~\ref{thm nonnegative}(c) using the equivalence of (ii) and (iii), or also by the definition in (\ref{eqn S def}), one has $S(G,\chi,\exp(G))=0$. However, if $G$ has an element of order $\exp(G)$ we do not have a general formula for $S(G,\chi, \exp(G))$.

\smallskip

(f) One might also ask if the virtual character $\sum_{\rho\subseteq\pi(n)} (-1)^{|\rho|} \Psi^{n(\rho)}(\chi)$ is actually a genuine character of $G$. The answer is negative as the following trivial example shows. Let $G$ be a cyclic group of prime order $p$, let $\chi\in\Irr(G)$ be faithful and $n=p$. Then the above virtual character is equal to $1_G-\chi$.
\end{examples}




\end{document}